\theoremstyle{plain}
\numberwithin{equation}{section}
\theoremstyle{plain}
\newtheorem{Proposition}[equation]{Proposition}
\newtheorem{Corollary}[equation]{Corollary}
\newtheorem*{Corollary*}{Corollary}
\newtheorem{Theorem}[equation]{Theorem}
\newtheorem*{Theorem*}{Theorem}
\theoremstyle{definition}
\newtheorem{Definition}[equation]{Definition}
\newtheorem{Example}[equation]{Example}
\def\phi{\varphi}
\def\D{\mathbb{D}}
\def\phi{\varphi}
\def\M{\mathscr{M}}
\renewcommand{\leq}{\leqslant}
\renewcommand{\geq}{\geqslant}
\renewcommand{\subset}{\subseteq}
\subjclass[2020]{Primary 46E15; Secondary 30J99, 30H99.}
\begin{document}



\title{On the geometry of the multiplier space of $\ell_A^p$}
\author[Cheng]{Raymond Cheng}
\address{Department of Mathematics and Statistics, Old Dominion University, Norfolk, VA 23529, USA. } \email{rcheng@odu.edu}
\author[Felder]{Christopher Felder}
\address{Department of Mathematics and Statistics, Washington University in St.\ Louis, St.\ Louis, MO 63130, USA. } \email{cfelder@wustl.edu}

\date{\today}

\begin{abstract}
For $p \in (1,\infty)\setminus \{2\}$, some properties of the space $\mathscr{M}_p$ of multipliers on $\ell^p_A$ are derived.  In particular, the failure of the weak parallelogram laws and the Pythagorean inequalities is demonstrated for $\mathscr{M}_p$.   It is also shown that the extremal multipliers on the $\ell^p_A$ spaces are exactly the monomials, in stark contrast to the $p=2$ case.  
\end{abstract}

\maketitle


\section{Introduction}

Sequence spaces play an important role in functional analysis, providing a rich source of examples, a fertile ground for generating conjectures, and a supply of applicable tools.  Indeed the theory of Banach spaces arose from early studies of the sequence space $\ell^p$.  The case $\ell^1$ is connected to the Wiener Algebra, and its additional structure has made deeper inroads possible.  The case of $\ell^2$ is particularly well understood, being isometrically isomorphic to the Hardy space $H^2$ on the open unit disk $\mathbb{D}$.  In this situation, the interplay between the analytical properties of the functions and the behavior of the space has given rise to a deep and extensive body of results, one of the great triumphs of the past century of mathematical analysis.

By contrast, when $p \neq 1$ and $p \neq 2$, relatively little is known about the space $\ell^p_A$, the space of analytic functions on  $\mathbb{D}$ for which the corresponding Maclaurin coefficients are $p$-summable.  For $1<p<\infty$, there is a notion of a $p$-inner function, in terms of which the zero sets of $\ell^p_A$ can be described \cite{Chengetal1}.  Unlike $H^2$, however, the analogous inner-outer factorization can fail when $p \neq 2$ \cite{CD}.  Whereas the multiplier algebra of $H^2$ is the familiar space $H^{\infty}$, the multipliers on $\ell^p_A$ have not been completely characterized.  

In this paper we obtain some geometric properties of the multiplier space of $\ell^p_A$.  These include the failure of the weak parallelogram laws and the Pythagorean inequalities.   In addition, we show that when $1<p<\infty$ and $p \neq 2$, the extremal multipliers on $\ell^p_A$ are exactly the monomials.  That is, the operator norm of a multiplier $\phi$ on $\ell^p_A$ coincides with its norm in $\ell^p_A$ precisely if it is of the form
\[
      \phi(z) = \gamma z^k
\]
for some $\gamma \in \mathbb{C}$ and nonnegative integer $k$.   Again, this is quite distinct from the $p=2$ case, in which the extremal multipliers  consist of the constant multiples of inner functions.


\section{The space $\ell^p_A$}

For $1\leq p \leq \infty$, the space $\ell^p_A$ is defined to be collection of analytic functions on the open unit disk $\mathbb{D}$ of the complex plane for which the Maclaurin coefficients are $p$-summable.  (The definition makes sense when $0<p<1$, but our attention will be limited to the range $1\leq p\leq \infty$.)
This function space is endowed with the norm that it inherits from the sequence space $\ell^p$.  Thus let us write 
\[
       \|f\|_p  = \|(a_k)_{k=0}^{\infty}\|_{\ell^p}
\]
for
\[
      f(z) = \sum_{k=0}^{\infty} a_k z^k
\]
belonging to $\ell^p_A$.  We stress that $\|\cdot\|_p$ refers to the norm  on $\ell^p_A$, and not the norm on $H^p$, or some other function space.

The following property is elementary, and will be essential for identifying the extremal multipliers on $\ell^p_A$ (for a proof, see  \cite[Proposition 1.5.2]{CMR}).  
\begin{Proposition}\label{elemresellp}
    If $1 \leq p_1 < p_2 \leq \infty$, then $\ell^{p_1}_A \subset \ell^{p_2}_A$, and $\|f\|_{p_2} \leq \|f\|_{p_1}$ for all $f \in \ell^{p_1}_A$.  Furthermore, $\|f\|_{p_2} = \|f\|_{p_1}$ holds  if and only if 
    \[
      f(z) = \gamma z^k
     \]
for some $\gamma \in \mathbb{C}$ and nonnegative integer $k$.
\end{Proposition}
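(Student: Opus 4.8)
The plan is to reduce everything to the corresponding statement about the underlying $\ell^p$ sequence norms, since by definition $\|f\|_p = \|(a_k)_{k=0}^\infty\|_{\ell^p}$ for $f(z) = \sum_{k=0}^\infty a_k z^k$. Thus it suffices to show that for a sequence $a = (a_k) \in \ell^{p_1}$ one has $\|a\|_{\ell^{p_2}} \leq \|a\|_{\ell^{p_1}}$, with equality precisely when $a$ is supported on a single coordinate.

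For the norm inequality, I would first dispose of the trivial case $a = 0$ and then normalize so that $\|a\|_{\ell^{p_1}} = 1$; this is legitimate because both sides are positively homogeneous of degree one. After normalization every coordinate satisfies $|a_k| \leq 1$, and since for $0 \leq t \leq 1$ the quantity $t^s$ is nonincreasing in the exponent $s$, we obtain $|a_k|^{p_2} \leq |a_k|^{p_1}$ for each $k$ when $p_2 < \infty$. Summing over $k$ yields $\sum_k |a_k|^{p_2} \leq \sum_k |a_k|^{p_1} = 1$, hence $\|a\|_{\ell^{p_2}} \leq 1 = \|a\|_{\ell^{p_1}}$; the endpoint $p_2 = \infty$ is handled by the parallel observation that $\sup_k |a_k| \leq 1$. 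Undoing the normalization delivers the claimed inequality and, in particular, the inclusion $\ell^{p_1}_A \subset \ell^{p_2}_A$.

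The backward direction of the equality characterization is immediate: if $f(z) = \gamma z^k$ then the coefficient sequence has exactly one nonzero entry, so $\|f\|_p = |\gamma|$ for every $p$, and all the norms coincide. For the forward direction I would again normalize to $\|a\|_{\ell^{p_1}} = 1$ and exploit the fact that equality in a sum of nonnegative inequalities forces equality in each summand. Concretely, $\sum_k |a_k|^{p_2} = \sum_k |a_k|^{p_1}$ together with the pointwise bounds $|a_k|^{p_2} \leq |a_k|^{p_1}$ forces $|a_k|^{p_2} = |a_k|^{p_1}$ for every $k$; for $p_1 < p_2 < \infty$ this happens exactly when $|a_k| \in \{0,1\}$ (dividing through by $|a_k|^{p_1}$ when $a_k \neq 0$ gives $|a_k|^{p_2 - p_1} = 1$, whence $|a_k| = 1$). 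Since the $p_1$-norm equals one, there can be precisely one coordinate of modulus one and all others must vanish, so $a$ is supported on a single index, i.e.\ $f(z) = \gamma z^k$. Reinstating the normalization constant lets $\gamma$ range over all of $\mathbb{C}$.

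The computation is elementary, so the only delicate points are bookkeeping ones: ensuring that the rigidity argument genuinely isolates a single coordinate rather than merely bounding the size of the support, and treating the endpoint $p_2 = \infty$ separately, where the supremum replaces the series and one argues that $\sup_k |a_k| = 1$ combined with $\sum_k |a_k|^{p_1} = 1$ again forces a single unimodular coordinate. I expect the main, if modest, obstacle to be organizing the equality case so that both the finite and infinite values of $p_2$ are covered cleanly.
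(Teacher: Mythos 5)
Your proof is correct. Note that the paper does not actually prove this proposition itself --- it cites \cite[Proposition 1.5.2]{CMR} --- so your argument fills in what the paper outsources, and it is the standard one: normalize to $\|a\|_{\ell^{p_1}}=1$, use the pointwise monotonicity $|a_k|^{p_2}\leq|a_k|^{p_1}$ for $|a_k|\leq 1$, and extract the equality case from the term-by-term rigidity. Your handling of the endpoint $p_2=\infty$ is also sound; the one detail worth making explicit there is that the supremum $\sup_k|a_k|=1$ is actually attained, which follows because $a\in\ell^{p_1}$ forces $|a_k|\to 0$, so only finitely many coordinates can be close to $1$.
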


Throughout this paper, if $1 \leq p \leq \infty$, then $p'$ will be the H\"{o}lder conjugate to $p$, that is, $1/p + 1/p' = 1$ holds.  
We recall that for $1 \leq p < \infty$, the dual space of $\ell^p_A$ can be identified with $\ell^{p'}_A$, under the pairing
\[
       \langle f, g\rangle = \sum_{k=0}^{\infty} f_k g_k,
\]
where $f(z) = \sum_{k=0}^{\infty} f_k z^k$ and $g(z) = \sum_{k=0}^{\infty} g_k z^k$ (except that complex conjugates are taken of the coefficients $g_k$ when $p=2$).  

For further exploration of $\ell^p_A$, we refer to the paper \cite{MR3714456} or the book \cite{CMR}.


\section{Orthogonality}

There is a natural way to define ``inner function'' in the context of $\ell^p_A$, that makes use of a notion of orthogonality in general normed linear spaces.  

Let $\mathbf{x}$ and $\mathbf{y}$ be vectors belonging to a normed linear space $\mathcal{X}$.  We say that $\mathbf{x}$ is {\em orthogonal} to $\mathbf{y}$ in the Birkhoff-James sense \cite{AMW, Jam}  if
\begin{equation}\label{2837eiywufh[wpofjk}
      \|  \mathbf{x} + \beta \mathbf{y} \|_{\mathcal{X}} \geq \|\mathbf{x}\|_{\mathcal{X}}
\end{equation}
for all scalars $\beta$, 
and in this case we write $\mathbf{x} \perp_{\mathcal{X}} \mathbf{y}$.

Birkhoff-James orthogonality extends the concept of orthogonality from an inner product space to normed spaces.   There are other ways to generalize orthogonality, but this approach is particularly fruitful since it is connected to an extremal condition via \eqref{2837eiywufh[wpofjk}.

 It is straightforward to check that if $\mathcal{X}$ is a Hilbert space, then the usual orthogonality relation $\mathbf{x} \perp \mathbf{y}$ is equivalent to $\mathbf{x} \perp_{\mathcal{X}} \mathbf{y}$.  More typically, however, the relation $\perp_{\mathcal{X}}$ is neither symmetric nor linear.  When $\mathcal{X} = \ell^{p}_A$,  let us write $\perp_p$ instead of  $\perp_{\ell^p_A}$.  
 
There
is an analytical criterion for the relation $\perp_p$ when $p \in (1, \infty)$.

\begin{Theorem}[James \cite{Jam}]
Suppose that $1<p<\infty$.  Then for $f(z)= \sum_{k=0}^{\infty} f_k z^k$ and $g(z)= \sum_{k=0}^{\infty} g_k z^k$ belonging to $\ell^p_A$ we have
\begin{equation}\label{BJp}
 {f} \perp_{p} {g}  \iff   \sum_{k=0}^{\infty} |f_k|^{p - 2} \overline{f}_k g_k  = 0,
\end{equation}
where any incidence of ``$|0|^{p - 2} 0$'' in the above sum  is interpreted as zero.
\end{Theorem}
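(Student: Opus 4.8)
The plan is to recast the orthogonality relation as a minimization problem and apply a first-order optimality criterion. By definition, $f \perp_p g$ means exactly that $\beta = 0$ is a global minimizer of the function
\[
   \Phi(\beta) = \|f + \beta g\|_p^p = \sum_{k=0}^{\infty} |f_k + \beta g_k|^p, \qquad \beta \in \C ;
\]
raising to the power $p$ is harmless, since $t \mapsto t^p$ is strictly increasing on $[0,\infty)$, so $\beta=0$ minimizes $\|f+\beta g\|_p$ precisely when it minimizes $\Phi$. Identifying $\C$ with $\R^2$ via $\beta = s + it$, the map $\Phi$ is convex, being a sum of the convex functions $\beta \mapsto |f_k + \beta g_k|^p$ (each the modulus of an affine function of $\beta$, raised to a power $p \geq 1$). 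For a differentiable convex function on $\R^2$, a point is a global minimizer if and only if it is a critical point; hence, once differentiability of $\Phi$ at the origin is established, the relation $f \perp_p g$ is equivalent to $\nabla \Phi(0) = 0$.

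First I would carry out the differentiation. Writing $w_k = w_k(\beta) = f_k + \beta g_k$ and differentiating $|w_k|^p = (w_k \overline{w_k})^{p/2}$ in $s$ and $t$ gives, for $p > 1$,
\[
   \partial_s |w_k|^p = p\,|w_k|^{p-2}\,\mathrm{Re}\!\big(\overline{w_k}\, g_k\big), \qquad \partial_t |w_k|^p = -\,p\,|w_k|^{p-2}\,\mathrm{Im}\!\big(\overline{w_k}\, g_k\big).
\]
To differentiate $\Phi$ term by term near $\beta = 0$ one needs the series of these partial derivatives to converge uniformly on a neighborhood of the origin. This follows from the bound $|w_k|^{p-2}\,|\overline{w_k}\, g_k| \leq |w_k|^{p-1}|g_k|$ together with H\"older's inequality: for $|\beta| \leq R$ one has $|w_k| \leq |f_k| + R|g_k|$, and $\sum_k (|f_k| + R|g_k|)^{p-1}|g_k| \leq \big\||f_k| + R|g_k|\big\|_p^{p-1}\,\|g\|_p < \infty$ because $(|f_k|+R|g_k|)^{p-1} \in \ell^{p'}$ and $g \in \ell^p$. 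Evaluating the differentiated series at $\beta = 0$, where $w_k = f_k$, yields
\[
   \partial_s \Phi(0) = p\sum_{k=0}^{\infty} |f_k|^{p-2}\,\mathrm{Re}\!\big(\overline{f_k}\, g_k\big), \qquad \partial_t \Phi(0) = -\,p\sum_{k=0}^{\infty} |f_k|^{p-2}\,\mathrm{Im}\!\big(\overline{f_k}\, g_k\big),
\]
so $\nabla \Phi(0) = 0$ if and only if both the real and imaginary parts of $\sum_k |f_k|^{p-2}\overline{f_k} g_k$ vanish, which is the asserted identity \eqref{BJp}.

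The step requiring the most care, and the source of the ``$|0|^{p-2}0$'' convention, is the behavior of the factor $|w_k|^{p-2}$ where $w_k$ is small. When $1 < p < 2$ this factor is unbounded near the origin, but the products $|w_k|^{p-2}\,\mathrm{Re}(\overline{w_k} g_k)$ and $|w_k|^{p-2}\,\mathrm{Im}(\overline{w_k} g_k)$ are dominated by $|w_k|^{p-1}|g_k| \to 0$, so each summand $\beta \mapsto |f_k + \beta g_k|^p$ is in fact $C^1$ across the point where it vanishes, with gradient $0$ there. In particular, every index $k$ with $f_k = 0$ contributes nothing to $\nabla\Phi(0)$, which matches the stated interpretation of the corresponding term in \eqref{BJp} as zero. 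Establishing this differentiability at the vanishing points, along with the domination used to differentiate the series termwise in the range $1 < p < 2$, is the only genuinely delicate part of the argument; elsewhere it is the routine first-order criterion for a convex minimum.
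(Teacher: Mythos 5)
Your proposal is correct. Note that the paper offers no proof of this statement at all --- it is quoted from James \cite{Jam} as a known result --- so there is no internal argument to compare against; what you have written is the classical derivation behind James's criterion: recast Birkhoff--James orthogonality as the convex minimization of $\Phi(\beta)=\sum_k |f_k+\beta g_k|^p$ over $\beta\in\C$, establish differentiability, and read off \eqref{BJp} from the first-order condition $\nabla\Phi(0)=0$. You also correctly isolate the only two delicate points: justifying termwise differentiation (your Weierstrass-type domination $|w_k|^{p-2}|\overline{w_k}g_k|\leq (|f_k|+R|g_k|)^{p-1}|g_k|$ plus H\"older does this), and the $C^1$-smoothness of each summand $\beta\mapsto|f_k+\beta g_k|^p$ across its zero when $1<p<2$, where the vanishing of $|w_k|^{p-1}|g_k|$ is exactly what the paper's ``$|0|^{p-2}0=0$'' convention encodes.
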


In light of \eqref{BJp} we define, for a complex number $\alpha=re^{i\theta}$, and any $s > 0$, the quantity
\begin{equation}\label{definition-de-zs}
\alpha^{\langle s \rangle} = (re^{i\theta})^{\langle s \rangle} := r^{s} e^{-i\theta}.
\end{equation}
It is readily seen that for any complex numbers $\alpha$ and $\beta$,  exponent $s>0$, and integer $n \geq 0$, we have
\begin{align*}
(\alpha\beta)^{\langle s\rangle} &= \alpha^{\langle s\rangle} \beta^{\langle s\rangle}\\
|\alpha^{\langle s \rangle}| &= |\alpha|^s\\
\alpha^{\langle s \rangle}\alpha &= |\alpha|^{s+1}\\
(\alpha^{\langle s \rangle})^n &= (\alpha^n)^{\langle s \rangle}\\
(\alpha^{\langle p-1 \rangle})^{\langle p'-1 \rangle} &= \alpha.
\end{align*}

Further to the notation \eqref{definition-de-zs}, for $f(z) = \sum_{k=0}^{\infty} f_k z^k$, let us write
\begin{equation}\label{uewuweuer}
   f^{\langle s \rangle}(z)  := \sum_{k=0}^{\infty}  f_k^{\langle s \rangle} z^k
\end{equation}
for any $s>0$.  By comparing with the case  $p=2$, we can think of taking the  ${\langle s \rangle}$ power as generalizing complex conjugation.

If $f \in \ell^{p}_A$, it is easy to verify that $f^{\langle p - 1\rangle} \in \ell^{p'}$.   Thus from  \eqref{BJp} we get
\begin{equation}\label{ppsduwebxzrweq5}
f \perp_{p} g \iff \langle g, f^{\langle p - 1\rangle} \rangle = 0.
\end{equation}
Consequently the relation $\perp_{p}$ is linear in its second argument, when $p \in (1, \infty)$, and it then makes sense to speak of a vector being orthogonal to a subspace of $\ell^{p}_A$.  In particular, if $f \perp_p g$ for all $g$ belonging to a subspace $\mathscr{X}$ of $\ell^p_A$, then
\[
       \| f + g \|_p  \geq  \|f\|_p
\]
for all $g \in \mathscr{X}$.  That is, $f$ solves an extremal problem in relation to the subspace $\mathscr{M}$.

Direct calculation will also confirm that
\[
     \langle f, f^{\langle p-1\rangle} \rangle = \|f\|^p_p.
\]

With this concept of orthogonality established, we may now define what it means for a function in $\ell^p_A$ to be inner in a related sense.  
\begin{Definition}
   Let $1<p<\infty$.  A function $f \in \ell^p_A$ is said to be $p$-inner if it is not identically zero and it satisfies
   \[
          f(z)\ \, \perp_p\ \, z^k f(z)
   \]
   for all positive integers $k$.
\end{Definition}

That is, $f$ is nontrivially orthogonal to all of its forward shifts.  Apart from a harmless multiplicative constant, this definition is equivalent to the traditional meaning of ``inner'' when $p=2$.  Furthermore, this approach to defining an inner property is consistent with that taken in other function spaces \cite{MR1440934, Dima, CMR2,CMR3,  MR1278431,MR1398090,MR1197044,MR936999,Seco}.

Birkhoff-James Orthogonality also plays a role when we examine a version of the Pythagorean theorem for normed spaces in Section \ref{geommp}.


\section{Multipliers on $\ell^p_A$}

An analytic function $\phi$ on $\D$ is said to be a {\em multiplier} of $\ell_{A}^{p}$ if
$$f \in \ell^{p}_{A} \implies \phi f \in \ell^{p}_{A}.$$
The set of multipliers of $\ell^{p}_{A}$ will be denoted by $\M_{p}$.  

For $\phi \in \M_{p}$, an application of the closed graph theorem shows that the linear mapping
$$M_{\phi}: \ell^{p}_{A} \to \ell^{p}_{A}, \quad M_{\phi} f = \phi f$$ is continuous. Thus we can define the {\em multiplier norm} of $\phi$ by
$$\|\phi\|_{\M_p} := \sup\{\|\phi f\|_{p}: f \in \ell^{p}_{A}, \|f\|_{p} \leq 1\}.$$
In other words, the multiplier norm of $\phi$ coincides with the operator norm of $M_{\phi}$ on $\ell^{p}_{A}$.  Henceforth we identify the multiplication operator $M_{\phi}$ with its symbol $\phi$.

Relatively little is known about the multipliers on $\ell^p_A$, except when $p= 1$ or $p= 2$.  In the former case, we know that $\mathscr{M}_1 = \ell^1_A$, and in the latter, $\mathscr{M}_2 = H^{\infty}$.   We will accordingly concentrate our efforts on the range $1<p<\infty$, with $p\neq 2$.

The following basic results have been established in the literature.

\begin{Proposition}  Let $1<p<\infty$.
   If $\phi \in \mathscr{M}_p$, then $\phi \in H^{\infty} \cap \ell^p_A \cap \ell^{p'}_A$, and $\mathscr{M}_p = \mathscr{M}_{p'}$, with $\|\phi\|_{\mathscr{M}_{p}} = \|\phi\|_{\mathscr{M}_{p'}}$.
\end{Proposition}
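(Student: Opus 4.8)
The plan is to handle the three claims separately, dispatching the memberships quickly and reserving the identity $\mathscr{M}_p = \mathscr{M}_{p'}$ for last, as it carries the real content. The membership $\phi \in \ell^p_A$ is immediate, since the constant function $1$ lies in $\ell^p_A$ and hence $\phi = M_\phi 1 \in \ell^p_A$. For $\phi \in H^\infty$ I would use point evaluations: for $z \in \D$, H\"{o}lder's inequality against $(z^k)_k \in \ell^{p'}$ shows that the evaluation functional $E_z \colon f \mapsto f(z)$ is bounded on $\ell^p_A$, and it is nonzero because $E_z(1) = 1$. Since $(\phi f)(z) = \phi(z) f(z)$, one has $E_z \circ M_\phi = \phi(z) E_z$, that is $M_\phi^* E_z = \phi(z) E_z$, so $\phi(z)$ is an eigenvalue of $M_\phi^*$. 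As no eigenvalue can exceed the operator norm, $|\phi(z)| \leq \|M_\phi^*\| = \|M_\phi\| = \|\phi\|_{\mathscr{M}_p}$ uniformly in $z$, and since $\phi$ is already analytic this yields $\phi \in H^\infty$ with $\|\phi\|_\infty \leq \|\phi\|_{\mathscr{M}_p}$.

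For the heart of the matter, I would represent $M_\phi$ on Maclaurin coefficients as the lower-triangular Toeplitz matrix $T_\phi = (c_{i-j})_{i,j \geq 0}$, where $\phi(z) = \sum_m c_m z^m$ and $c_m = 0$ for $m < 0$, so that $\|\phi\|_{\mathscr{M}_p} = \|T_\phi\|_{\ell^p \to \ell^p}$. Duality for the bilinear pairing $\langle f, g\rangle = \sum_k f_k g_k$ (for $p \neq 2$ no conjugation intervenes, so the adjoint is literally the transpose) gives $\|T_\phi\|_{\ell^p \to \ell^p} = \|T_\phi^{T}\|_{\ell^{p'} \to \ell^{p'}}$, with $T_\phi^{T} = (c_{j-i})_{i,j}$ upper triangular. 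It then suffices to show that transposition preserves the norm on a fixed $\ell^q$, namely $\|T_\phi^{T}\|_{\ell^q} = \|T_\phi\|_{\ell^q}$. For this I would write $\|T_\phi\|_{\ell^q}$ as the supremum of $|\langle T_\phi g, h\rangle|$ over finitely supported $g, h$ that are unit vectors in $\ell^q$ and $\ell^{q'}$, and apply the index reversal $k \mapsto n - k$ on a common support $\{0, 1, \dots, n\}$; this reversal is an isometry of each finite $\ell^q$ section and sends $c_{i-j}$ to $c_{j-i}$, so it exchanges $\langle T_\phi g, h\rangle$ with $\langle T_\phi^{T} \tilde g, \tilde h\rangle$ for the reversed vectors. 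Letting $n \to \infty$ gives $\|T_\phi\|_{\ell^q} = \|T_\phi^{T}\|_{\ell^q}$, and taking $q = p'$ and chaining the equalities yields $\|\phi\|_{\mathscr{M}_p} = \|\phi\|_{\mathscr{M}_{p'}}$. In particular one norm is finite precisely when the other is, so $\mathscr{M}_p = \mathscr{M}_{p'}$; and then $\phi \in \ell^{p'}_A$ follows by applying $\phi \in \mathscr{M}_{p'}$ to $1 \in \ell^{p'}_A$.

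I expect the norm-invariance under transposition to be the main obstacle. The reversal operator is only an isometry on finite sections and is not even defined on all of $\ell^q$, so the argument must be framed as a supremum over finitely supported test vectors, and one must verify that $T_\phi$ and $T_\phi^{T}$ are probed correctly through the bilinear pairing. By contrast, the other ingredients are routine: the memberships are one-line consequences of $M_\phi 1 = \phi$, and $\|T\|_{\ell^p} = \|T^{T}\|_{\ell^{p'}}$ is the standard duality of operator norms.
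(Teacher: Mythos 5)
Your proposal is correct, but there is nothing in the paper to compare it against: the paper states this proposition as a known result and defers all proofs to \cite[Chapter 12]{CMR}, so you have effectively reconstructed the literature's argument rather than diverged from the paper's. The two memberships are handled exactly as one would expect ($\phi = M_\phi 1$ for $\ell^p_A$, and point evaluations as eigenvectors of $M_\phi^*$ for the $H^\infty$ bound $\|\phi\|_\infty \leq \|\phi\|_{\mathscr{M}_p}$), and your core argument for $\|\phi\|_{\mathscr{M}_p} = \|\phi\|_{\mathscr{M}_{p'}}$ — Banach-space duality identifying the adjoint of the lower-triangular Toeplitz matrix $T_\phi$ under the bilinear pairing with its transpose acting on $\ell^{p'}$, followed by the finite-section index reversal $k \mapsto n-k$ showing that transposition preserves the $\ell^q$ operator norm — is the classical proof of this fact. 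You correctly isolate and resolve the one genuine subtlety: the reversal is only an isometry on finite sections, so both norms must be expressed as suprema of $|\langle T g, h\rangle|$ over finitely supported unit vectors, and the identity $\langle T_\phi^{T}\tilde g, \tilde h\rangle = \langle T_\phi g, h\rangle$ is a direct change of index on $\{0,\dots,n\}$. Two details you leave implicit are routine but worth recording if this were written out in full: the interchange of summation needed to identify the adjoint with the transpose is justified by absolute convergence (rows and columns of $T_\phi$ are shifts of the coefficient sequence of $\phi$, which lies in $\ell^p$ by the first membership), and the bounded extension of $T_\phi$ from finitely supported vectors to all of $\ell^{p'}$ coincides with the formal matrix action because the relevant sums converge coordinatewise. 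Finally, your parenthetical about conjugation is harmless: for $p=2$ the pairing does carry conjugates, but the statement is trivial there since $p = p'$.
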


\begin{Proposition}\label{elempropmult}   Let $1<p<\infty$.
   If $\phi(z) = \sum_{k=0}^{\infty} \phi_k z^k  \in \mathscr{M}_p$, then $\|\phi\|_p \leq \|\phi\|_{\mathscr{M}_p} \leq \|\phi\|_1$ (with $\|\phi\|_1 = \infty$ being possible), and
   \[
        |\phi_0| + |\phi_1| + \cdots + |\phi_n| \leq \|\phi\|_{\mathscr{M}_p} (n+1)^{1/p'}.
   \]
   If all of the coefficients of $\phi$ are nonnegative, then $\phi \in \ell^1_A$, and $\|\phi\|_1 = \|\phi\|_{\mathscr{M}_p}$.
\end{Proposition}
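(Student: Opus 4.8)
The plan is to prove the four assertions separately, each by pairing a well-chosen test function with a classical inequality. For the chain $\|\phi\|_p\le\|\phi\|_{\M_p}\le\|\phi\|_1$, the left inequality comes from feeding the constant function into the multiplication operator: since $\|1\|_p=1$, we get $\|\phi\|_p=\|M_\phi 1\|_p\le\|\phi\|_{\M_p}\|1\|_p=\|\phi\|_{\M_p}$. For the right inequality, I would note that the Maclaurin coefficients of $\phi f$ are exactly the convolution of the coefficient sequences of $\phi$ and $f$, so Young's convolution inequality gives $\|\phi f\|_p\le\|\phi\|_1\|f\|_p$ for every $f$; taking the supremum over $\|f\|_p\le1$ yields $\|\phi\|_{\M_p}\le\|\phi\|_1$, which is vacuous when $\|\phi\|_1=\infty$.

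The coefficient bound is then a direct application of H\"{o}lder's inequality to the finite sequences $(|\phi_0|,\dots,|\phi_n|)$ and $(1,\dots,1)$ of length $n+1$:
\[
\sum_{k=0}^{n}|\phi_k|\le\Big(\sum_{k=0}^{n}|\phi_k|^p\Big)^{1/p}(n+1)^{1/p'}\le\|\phi\|_p\,(n+1)^{1/p'},
\]
after which the already-proved inequality $\|\phi\|_p\le\|\phi\|_{\M_p}$ finishes it.

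The substantive part is the nonnegative-coefficient case, where only $\|\phi\|_1\le\|\phi\|_{\M_p}$ remains to be shown (the reverse being Young's inequality) and where I must simultaneously deduce that $\|\phi\|_1$ is finite. The idea is to test $M_\phi$ against the normalized polynomials $f_N:=N^{-1/p}(1+z+\cdots+z^{N-1})$, each of which has $\|f_N\|_p=1$. Because every $\phi_k\ge0$, for any fixed $M$ and every index $m$ with $M\le m\le N-1$ the $z^m$-coefficient of $\phi f_N$ equals $N^{-1/p}\sum_{k=0}^{m}\phi_k\ge N^{-1/p}\sum_{k=0}^{M}\phi_k$; summing the $p$-th powers of these $N-M$ lower bounds gives
\[
\|\phi\|_{\M_p}^p\ge\|\phi f_N\|_p^p\ge\Big(1-\tfrac{M}{N}\Big)\Big(\sum_{k=0}^{M}\phi_k\Big)^p.
\]
Letting $N\to\infty$ with $M$ fixed gives $\|\phi\|_{\M_p}\ge\sum_{k=0}^{M}\phi_k$, and then $M\to\infty$ gives $\|\phi\|_{\M_p}\ge\|\phi\|_1$; in particular $\phi\in\ell^1_A$, and combined with the Young bound we obtain $\|\phi\|_1=\|\phi\|_{\M_p}$.

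I expect this last step to be the main obstacle, since the general coefficient estimate only controls partial sums up to the factor $(n+1)^{1/p'}\to\infty$ and so cannot by itself bound $\|\phi\|_1$. The gain in the nonnegative case is that monotonicity forces a whole block of coefficients of $\phi f_N$ (those indexed by $M\le m\le N-1$) to be uniformly large, so that the full coefficient sum survives in the $\ell^p$ norm as $N\to\infty$ rather than being diluted by the normalization; getting the bookkeeping of the summation window right is the only delicate point.
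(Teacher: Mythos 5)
Your proof is correct. A point of comparison worth noting up front: the paper does not actually prove this proposition --- it states it as known and defers to \cite[Chapter 12]{CMR} --- so there is no in-paper argument to measure you against; what you have written is a complete, self-contained proof of the standard type. Each step checks out: $\|\phi\|_p \leq \|\phi\|_{\mathscr{M}_p}$ by testing against $f=1$; $\|\phi\|_{\mathscr{M}_p} \leq \|\phi\|_1$ by Young's convolution inequality applied to the coefficient sequences; and your observation that the coefficient bound with exponent $1/p'$ is then a two-line consequence of H\"{o}lder together with $\|\phi\|_p \leq \|\phi\|_{\mathscr{M}_p}$ is a clean reduction --- it needs no further test functions at all. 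The substantive final step is also sound: with $f_N = N^{-1/p}(1+z+\cdots+z^{N-1})$, nonnegativity of the $\phi_k$ guarantees that every coefficient of $\phi f_N$ indexed by $M \leq m \leq N-1$ is at least $N^{-1/p}\sum_{k=0}^{M}\phi_k$, so summing $p$-th powers over that block of $N-M$ indices gives $\|\phi\|_{\mathscr{M}_p}^p \geq (1-M/N)\bigl(\sum_{k=0}^{M}\phi_k\bigr)^p$, and the two limits $N\to\infty$, then $M\to\infty$, yield $\|\phi\|_{\mathscr{M}_p} \geq \|\phi\|_1$; this simultaneously proves finiteness of $\|\phi\|_1$ (i.e., $\phi\in\ell^1_A$) and, with the Young bound, the equality $\|\phi\|_1 = \|\phi\|_{\mathscr{M}_p}$. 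Your closing diagnosis is also accurate: the general coefficient estimate cannot give membership in $\ell^1_A$ because of the $(n+1)^{1/p'}$ factor, and it is precisely the monotonicity of the partial sums in the nonnegative case that lets the full $\ell^1$ mass survive the $N^{-1/p}$ normalization.
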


Define the difference quotient mapping $Q_w$ by
\[
      Q_w f(z) := \frac{f(z)-f(w)}{z-w}
\]
for any $w \in \mathbb{D}$ and analytic function $f$ on $\mathbb{D}$. 

Difference quotients are (bounded) operators on $\mathscr{M}_p$.  In fact, for any multiplier $\phi$ on $\ell^p_A$, and $w \in \mathbb{D}$,
\[
        \|Q_w \phi\|_{\mathscr{M}_p}  \leq  \frac{1}{1 - |w|} (\|\phi\|_{\mathscr{M}_p} + \phi(w)).
\]   
 
 For proofs of these multiplier properties, see \cite[Chapter 12]{CMR}, which has references to original sources.

To extract some geometric information about $\mathscr{M}_p$, we will rely on the following observation.
\begin{Corollary}\label{linmultell1}
     For any complex numbers $\alpha$ and $\beta$, the multiplier $\phi(z) = \alpha + \beta z$ satisfies
     \[
            \|\phi\|_{\mathscr{M}_p}  =  \|\phi\|_1 = |\alpha|+|\beta|.
     \]
\end{Corollary}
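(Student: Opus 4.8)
The plan is to read this off Proposition~\ref{elempropmult} once the symmetries of the problem have been used to reduce to a symbol with nonnegative coefficients. First observe that $\phi(z) = \alpha + \beta z$ has only finitely many nonzero Maclaurin coefficients, so $\phi \in \ell^1_A$ with $\|\phi\|_1 = |\alpha| + |\beta| < \infty$; in particular $\phi \in \mathscr{M}_p$, and the upper estimate in Proposition~\ref{elempropmult} gives
\[
      \|\phi\|_{\mathscr{M}_p} \leq \|\phi\|_1 = |\alpha| + |\beta|.
\]
The entire content of the statement is therefore the reverse inequality $\|\phi\|_{\mathscr{M}_p} \geq |\alpha| + |\beta|$.

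To obtain this I would exploit two norm-preserving operations on symbols. The first is scaling by a unimodular constant: since $M_{c\phi} = c\,M_{\phi}$, we have $\|c\phi\|_{\mathscr{M}_p} = \|\phi\|_{\mathscr{M}_p}$ whenever $|c| = 1$. The second is conjugation by the rotation operators $U_{\psi} f(z) := f(e^{i\psi} z)$. Each $U_{\psi}$ is a surjective isometry of $\ell^p_A$, because it sends $\sum_k a_k z^k$ to $\sum_k a_k e^{ik\psi} z^k$ and leaves the moduli of the coefficients unchanged. A direct computation gives $U_{\psi} M_{\phi} U_{\psi}^{-1} = M_{\phi(e^{i\psi}\,\cdot\,)}$, so conjugation by $U_{\psi}$ preserves the operator norm while replacing the symbol $\alpha + \beta z$ by $\alpha + \beta e^{i\psi} z$.

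Combining these, I would choose $c = e^{-i\arg\alpha}$ and $\psi = \arg\alpha - \arg\beta$ (with the convention $\arg 0 = 0$), which transforms the symbol into the polynomial $\tilde\phi(z) := |\alpha| + |\beta| z$ without changing the multiplier norm, so that $\|\tilde\phi\|_{\mathscr{M}_p} = \|\phi\|_{\mathscr{M}_p}$. Since $\tilde\phi$ has nonnegative coefficients, the corresponding clause of Proposition~\ref{elempropmult} yields $\|\tilde\phi\|_{\mathscr{M}_p} = \|\tilde\phi\|_1 = |\alpha| + |\beta|$, which is precisely the desired lower bound. The only point requiring care, and the one I would regard as the main (though routine) obstacle, is verifying that $U_{\psi}$ is a genuine surjective isometry of $\ell^p_A$ and that the conjugation identity $U_{\psi} M_{\phi} U_{\psi}^{-1} = M_{\phi(e^{i\psi}\,\cdot\,)}$ holds as claimed; once the reduction to nonnegative coefficients is in place, the conclusion is immediate.
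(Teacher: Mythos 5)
Your proof is correct and follows essentially the same route as the paper: reduce, via the rotation isometry $f(z)\mapsto f(\omega z)$ of $\ell^p_A$ (together with a harmless unimodular scalar), to the symbol $|\alpha|+|\beta|z$ with nonnegative coefficients, and then invoke the last clause of Proposition~\ref{elempropmult}. The paper performs the same reduction in one step with $\omega = \alpha\bar{\beta}/|\alpha\bar{\beta}|$; your version merely makes the conjugation identity and the unimodular scaling explicit.
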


\begin{proof}
The claim is trivial if $\alpha=0$ or $\beta = 0$.  Otherwise,
the mapping
\[
     f(z) \longmapsto  f\Big( \frac{\alpha \bar{\beta}}{| \alpha \bar{\beta}|} z\Big)
\]
determines a linear isometry on $\ell^p_A$ (in fact it is unitary).

Consequently, the multiplier $\phi$
 has the same norm as the multiplier
 \[
   \phi\Big( \frac{\alpha \bar{\beta}}{| \alpha \bar{\beta}|} z\Big) =  \frac{1}{\bar{\alpha}} \Bigg(  |\alpha|^2 + \beta \bar{\alpha} \Big( \frac{\alpha \bar{\beta}}{| \alpha \bar{\beta}|} z\Big)   \Bigg) =  \frac{1}{\bar{\alpha}} \big( |\alpha|^2 + |\beta \bar{\alpha}| z \big),
 \]  
 which is $|\alpha| + |\beta|$, according to the last part of Proposition \ref{elempropmult}.
 \end{proof}

 Already this delivers some information about the geometry of $\mathscr{M}_p$.
 \begin{Corollary}
If $1<p<\infty$, then $\mathscr{M}_p$ fails to be strictly convex.  
 \end{Corollary}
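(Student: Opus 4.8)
The plan is to exploit Corollary \ref{linmultell1}, which computes the multiplier norm exactly on the affine polynomials $\alpha + \beta z$ and thereby reveals that, on this two-dimensional slice of $\mathscr{M}_p$, the multiplier norm coincides with the $\ell^1$-norm of the coefficient pair $(\alpha, \beta)$. Since the $\ell^1$-norm is the archetypal non-strictly-convex norm---its unit sphere contains flat segments---I would simply transport one such segment into $\mathscr{M}_p$ and observe that it lies entirely on the unit sphere.

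Concretely, I would take $\phi_0(z) = 1$ and $\phi_1(z) = z$, each of which has unit multiplier norm by Corollary \ref{linmultell1} (or, for $\phi_1$, by Proposition \ref{elempropmult} applied to a monomial). For $t \in [0,1]$, consider the convex combination $\psi_t(z) := (1-t) + tz$. Applying Corollary \ref{linmultell1} with $\alpha = 1-t$ and $\beta = t$ yields
\[
   \|\psi_t\|_{\mathscr{M}_p} = |1-t| + |t| = 1
\]
for every $t \in [0,1]$. Hence the whole segment joining the distinct unit vectors $\phi_0$ and $\phi_1$ sits on the unit sphere of $\mathscr{M}_p$, which is precisely the negation of strict convexity.

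There is essentially no obstacle beyond Corollary \ref{linmultell1} itself: the substantive computation---that a linear multiplier has norm equal to the $\ell^1$-norm of its coefficients---has already been carried out, and the present claim is an immediate repackaging of that fact. The only point meriting a word of care is confirming that the two endpoints are genuinely distinct unit vectors, so that the exhibited segment is nondegenerate; this is clear, since $1$ and $z$ are linearly independent in $\ell^p_A$.
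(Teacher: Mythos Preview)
Your proof is correct and matches the paper's approach exactly: the paper simply observes that the unit ball of $\mathscr{M}_p$ contains the segment $t + (1-t)z$ for $0\leq t\leq 1$, which is precisely your segment $\{\psi_t\}$ with the parameter reversed. Your write-up merely supplies the details (invoking Corollary~\ref{linmultell1} and checking the endpoints are distinct unit vectors) that the paper leaves implicit.
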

 \begin{proof}
     The unit ball in $\mathscr{M}_p$ contains the segment $t + (1-t)z$ for $0\leq t\leq 1$. 
 \end{proof}

It is known that certain Blaschke products are multipliers of $\ell^p_A$ (e.g., if the zeros converge to the boundary rapidly enough), and that certain other classes of functions are multipliers. However, there does not yet exist a complete characterization of $\mathscr{M}_p$ in terms of the coefficients, or of the boundary function.  Our sources on the subject  include \cite{MR0174937,MR0121655,MR1296567,MR1678853,MR1771763,MR2238173,MR1373751,MR0193513,MR583804,MR0352958,MR592495}, along with the survey paper \cite{MR3714456}.


\section{The geometry of $\mathscr{M}_p$}\label{geommp}

It is well known that when $1<p<\infty$, the spaces $\ell^p$ (and hence also $\ell^p_A$) are uniformly convex and uniformly smooth.  In fact, more can be said.  A normed space $\mathscr{X}$ is said to satisfy the Lower Weak Parallelogram property (LWP) with constant $C>0$ and exponent $r>1$, if
\[
      \| x + y \|^r_{\mathscr{X}} + C\| x - y \|^r_{\mathscr{X}} \leq 2^{r-1}(\|x\|_{\mathscr{X}}^r + \|y\|_{\mathscr{X}}^r)
\]
for all $x$ and $y$ in $\mathscr{X}$;  it satisfies the Upper Weak Parallelogram property (UWP) if for some (possibly different) constant and exponent the reverse inequality holds for all $x$ and $y$ in $\mathscr{X}$.    If $\mathscr{X}$ is a Hilbert space, then the parallelogram law holds, corresponding to $r=2$ and $C=1$.   Otherwise, these inequalities generalize Clarkson's inequalities \cite{Clark}, and the parameters $r$ and $C$ give a sense of how far the space $\mathscr{X}$ departs from behaving like a Hilbert space.

It was shown in \cite{CR} that the $L^p$ spaces satisfy LWP and UWP when $1<p<\infty$, and the full ranges of parameters $C$ and $r$ were identified (see also \cite{Byn,BD,CH2,CMP1,CRM2}).  More generally, a space satisfying LWP is uniformly convex, and a space satisfying UWP is uniformly smooth \cite[Proposition 3.1]{CR}.  From this it could be further surmised that the dual of a LWP space is an UWP space, and vice-versa; this is made precise in \cite[Theorem 3.1]{CH2}.

Another useful consequence of the weak parallelogram laws is a version of the Pythagorean Theorem for normed spaces, where orthogonality is in the  Birkhoff-James sense.  It takes the form of a family of inequalities relating the lengths of orthogonal vectors with that of their sum \cite[Theorem 3.3]{CR}.

\begin{Theorem}[\cite{CR}]
   If  a smooth Banach space $\mathscr{X}$ satisfies LWP with constant $C>0$ and exponent $r>1$,  then there exists $K>0$ such that 
\begin{equation}\label{LPyth}
  \| x \|^r_{\mathscr{X}}  +  K\|y\|^r_{\mathscr{X}} \leq \|x + y \|^r_{\mathscr{X}}
\end{equation}
whenever  $x \perp_{\mathscr{X}}  y$;
if $\mathscr{X}$ satisfies UWP with constant $C>0$ and exponent $r>1$,  then there exists a positive constant $K$ such that  
\begin{equation}\label{UPyth}
   \| x \|^r_{\mathscr{X}}  +  K\|y\|^r_{\mathscr{X}} \geq \|x + y \|^r_{\mathscr{X}}
\end{equation}
whenever  $x \perp_{\mathscr{X}}  y$.
In either case, the constant $K$ can be chosen to be ${C}/{(2^{r-1}-1)}$
\end{Theorem}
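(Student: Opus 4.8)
The plan is to pin down the optimal $K$ by a self-improvement (fixed-point) argument rather than guessing its value. For the LWP case, set
\[
\kappa := \inf\left\{ \frac{\|x+y\|_{\mathscr{X}}^r - \|x\|_{\mathscr{X}}^r}{\|y\|_{\mathscr{X}}^r} : x \perp_{\mathscr{X}} y,\ y \neq 0 \right\},
\]
and note that $\kappa \in [0,\infty)$: it is nonnegative because $x \perp_{\mathscr{X}} y$ forces $\|x+y\|_{\mathscr{X}} \geq \|x\|_{\mathscr{X}}$, and finite because any single orthogonal pair realizes a finite value. The goal is then to show $\kappa \geq C/(2^{r-1}-1)$, which is exactly \eqref{LPyth}.

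The engine is one substitution into the parallelogram law. Applying LWP to the pair $u = x+y$ and $v = x$ gives
\[
\|2x+y\|_{\mathscr{X}}^r + C\|y\|_{\mathscr{X}}^r \leq 2^{r-1}\big(\|x+y\|_{\mathscr{X}}^r + \|x\|_{\mathscr{X}}^r\big).
\]
Next I would exploit that Birkhoff-James orthogonality is homogeneous in its second argument, so $x \perp_{\mathscr{X}} y$ implies $x \perp_{\mathscr{X}} \tfrac12 y$; writing $\|2x+y\|_{\mathscr{X}}^r = 2^r\|x+\tfrac12 y\|_{\mathscr{X}}^r$ and invoking the definition of $\kappa$ on the orthogonal pair $(x,\tfrac12 y)$ yields $\|2x+y\|_{\mathscr{X}}^r \geq 2^r\|x\|_{\mathscr{X}}^r + \kappa\|y\|_{\mathscr{X}}^r$. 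Substituting and simplifying gives, for every orthogonal pair,
\[
\|x+y\|_{\mathscr{X}}^r - \|x\|_{\mathscr{X}}^r \geq \frac{\kappa + C}{2^{r-1}}\,\|y\|_{\mathscr{X}}^r.
\]
Taking the infimum over all such pairs produces $\kappa \geq (\kappa+C)/2^{r-1}$, whence $\kappa(2^{r-1}-1) \geq C$ and the sharp value $\kappa \geq C/(2^{r-1}-1)$ drops out; here $2^{r-1}>1$ since $r>1$.

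For the UWP case the identical computation runs with all inequalities reversed, replacing $\kappa$ by $K^* := \sup\{(\|x+y\|_{\mathscr{X}}^r - \|x\|_{\mathscr{X}}^r)/\|y\|_{\mathscr{X}}^r\}$ and producing $K^* \leq (K^*+C)/2^{r-1}$, hence $K^* \leq C/(2^{r-1}-1)$. The one genuinely new ingredient, and the step I expect to be the main obstacle, is that this self-improvement is vacuous unless one first knows $K^* < \infty$; unlike the lower bound, no such a priori control follows from orthogonality alone. This is precisely where the smoothness hypothesis is used: a space satisfying UWP with exponent $r$ is uniformly smooth with modulus of smoothness of power type $r$, and a short scaling estimate — normalizing $\|y\|_{\mathscr{X}}=1$, treating $\|x\|_{\mathscr{X}}\geq 1$ and $\|x\|_{\mathscr{X}}\leq 1$ separately, and using that orthogonality annihilates the first-order term in $\|x+\tau y\|_{\mathscr{X}}$ — bounds $\|x+y\|_{\mathscr{X}}^r - \|x\|_{\mathscr{X}}^r$ uniformly. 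Securing this finiteness cleanly (or, alternatively, deducing the UWP inequality from the LWP inequality through the LWP–UWP duality between $\mathscr{X}$ and $\mathscr{X}^*$ together with a duality for Birkhoff-James orthogonality) is the crux; everything else is the common fixed-point computation above.
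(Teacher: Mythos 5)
The paper itself does not reproduce a proof of this theorem (it is quoted from \cite{CR}), so your proposal has to stand on its own. Your LWP half does stand: $\kappa$ is nonnegative and finite (the pair $(0,y)$ even shows $\kappa\leq 1$), Birkhoff--James orthogonality is indeed homogeneous in its second argument, and the substitution $u=x+y$, $v=x$ into LWP gives $\|x+y\|^r-\|x\|^r\geq 2^{1-r}(\kappa+C)\|y\|^r$ for every orthogonal pair, whence $\kappa\geq(\kappa+C)/2^{r-1}$ and so $\kappa\geq C/(2^{r-1}-1)$. This is in substance the mechanism behind the cited proof, where the self-improvement is run as an explicit iteration $K_{n+1}=(K_n+C)/2^{r-1}$, $K_1=C/2^{r-1}$, whose geometric series converges to the same fixed point; your packaging via the extremal constant is equivalent, and correctly observes that smoothness is never needed here.

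The UWP half, however, has a genuine gap, and it is exactly the one you flag: the inequality $K^*\leq (K^*+C)/2^{r-1}$ says nothing unless $K^*<\infty$, and neither route you sketch delivers that finiteness. The smoothness route rests on the lemma that UWP with exponent $r$ implies modulus of smoothness of power type $r$. For $r>2$ such a lemma can hold only vacuously, since no normed space of dimension at least two has modulus of smoothness of power type better than $2$ (by Nordlander's inequality $\rho_{\mathscr{X}}(\tau)\geq\sqrt{1+\tau^2}-1$); and a weaker power type $s<r$ does not suffice, because the scaling estimate with orthogonality then bounds $\|x+y\|^r-\|x\|^r$ only by a multiple of $\|x\|^{r-s}\|y\|^s$, which is not $O(\|y\|^r)$ as $\|x\|\to\infty$ with $\|y\|$ fixed. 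So that route requires proving either that UWP forces $r\leq 2$, or the power-type-$r$ lemma itself---work of the same order as the theorem you are proving. The duality route is likewise only named, not executed, and it would need both the LWP/UWP duality theorem and a careful transfer of the (non-symmetric) Birkhoff--James relation and of the constants.

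What you are missing is that finiteness follows from UWP itself in two lines, with no smoothness and no duality. Apply UWP to $u=(x+y)/2$ and $v=(x-y)/2$, so that $u+v=x$ and $u-v=y$:
\[
\|x\|^r_{\mathscr{X}} + C\|y\|^r_{\mathscr{X}} \;\geq\; 2^{r-1}\bigl(\|u\|^r_{\mathscr{X}}+\|v\|^r_{\mathscr{X}}\bigr) \;=\; \tfrac12\bigl(\|x+y\|^r_{\mathscr{X}}+\|x-y\|^r_{\mathscr{X}}\bigr).
\]
Since $x\perp_{\mathscr{X}} y$ gives $\|x-y\|_{\mathscr{X}}\geq\|x\|_{\mathscr{X}}$ (take $\beta=-1$ in the definition), this yields $\|x+y\|^r_{\mathscr{X}}\leq\|x\|^r_{\mathscr{X}}+2C\|y\|^r_{\mathscr{X}}$, i.e.\ $K^*\leq 2C<\infty$. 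Your own fixed-point step then improves the constant from $2C$ to $C/(2^{r-1}-1)$ and completes the proof.
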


When $\mathscr{X}$ is any Hilbert space, the parameters are $K=1$ and $r=2$, and the Pythagorean inequalities reduce to the familiar Pythagorean theorem.  More generally, these Pythagorean inequalities enable the application of some Hilbert space methods and techniques to smooth Banach spaces satisfying LWP or UWP; see, for example, \cite[Proposition 4.8.1 and Proposition 4.8.3; Theorem 8.8.1]{CMR}.

The weak parallelogram laws and the Pythagorean inequalities fail on $L^1$ and $L^{\infty}$.
We previously saw in Corollary \ref{linmultell1} that $\mathscr{M}_p$ contains a subspace, consisting of the linear functions, that behaves geometrically like $\ell^1_A$.  Consequently we would expect the weak parallelogram laws and the Pythagorean inequalities to fail on $\mathscr{M}_p$, and indeed that is the case.

\begin{Theorem}
     Let $1<p<\infty$.  The space $\mathscr{M}_p$ fails to satisfy LWP or UWP for any constant $C>0$ or exponent $r>1$.
\end{Theorem}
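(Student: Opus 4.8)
The plan is to exploit the observation, recorded just before the statement, that the two-dimensional subspace $S = \{\alpha + \beta z : \alpha, \beta \in \mathbb{C}\}$ sits inside $\mathscr{M}_p$ isometrically as $\ell^1$: by Corollary \ref{linmultell1} we have $\|\alpha + \beta z\|_{\mathscr{M}_p} = |\alpha| + |\beta|$. Since both the LWP and UWP inequalities are required to hold for \emph{all} $x$ and $y$ in the space, it suffices to produce, for each prescribed $C > 0$ and $r > 1$, a single pair of elements of $S$ for which the relevant inequality is violated. All the norms involved are then computed by the formula $|\alpha| + |\beta|$, so the problem reduces to elementary scalar inequalities.

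For the failure of LWP I would test $x = 1$ and $y = z$. Here $\|x\|_{\mathscr{M}_p} = \|y\|_{\mathscr{M}_p} = 1$, while $x + y = 1 + z$ and $x - y = 1 - z$ both have multiplier norm $2$. Substituting into the LWP inequality gives $2^r + C\,2^r \leq 2^{r-1}(1 + 1) = 2^r$, that is $1 + C \leq 1$, which is impossible for any $C > 0$. Thus LWP fails for every choice of $C > 0$ and $r > 1$ simultaneously.

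For the failure of UWP the constant vectors no longer suffice, and I would instead take $x = 1 + tz$ and $y = 1 - tz$ for a small parameter $t > 0$ to be chosen in terms of $C$ and $r$. Then $\|x\|_{\mathscr{M}_p} = \|y\|_{\mathscr{M}_p} = 1 + t$, while $x + y = 2$ and $x - y = 2tz$ have norms $2$ and $2t$ respectively. The UWP inequality becomes $2^r + C(2t)^r \geq 2^r(1 + t)^r$, i.e.\ $1 + C t^r \geq (1 + t)^r$. The point is that as $t \to 0^+$ the left side behaves like $1 + C t^r$ while $(1 + t)^r = 1 + rt + O(t^2)$; because $r > 1$ we have $t^r = o(t)$, so the $rt$ term dominates $C t^r$ and the inequality is reversed for all sufficiently small $t$. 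Hence for each fixed $C > 0$ and $r > 1$ one can choose $t$ small enough to violate UWP.

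The computations are routine; the only point requiring care is the asymptotic comparison $C t^r = o(t)$ against $(1 + t)^r - 1 \sim rt$ in the UWP step, which is precisely where the hypothesis $r > 1$ (rather than $r = 1$) is used and which explains why $\mathscr{M}_p$ cannot support a genuine upper weak parallelogram estimate. Alternatively, one could argue abstractly: restricting either inequality to $S$ would force $S$ itself to satisfy LWP (resp.\ UWP), hence to be uniformly convex (resp.\ uniformly smooth) and in particular strictly convex (resp.\ smooth); but $S$, being isometric to two-dimensional $\ell^1$, is neither, which yields the same conclusion.
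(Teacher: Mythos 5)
Your proof is correct, and it operates in the same arena as the paper's: both arguments live entirely inside the span of $\{1,z\}$, where Corollary \ref{linmultell1} converts multiplier norms into $\ell^1$-sums, and your LWP half (the pair $x=1$, $y=z$, forcing $1+C\leq 1$) is verbatim the paper's. The UWP half, however, differs in a way that matters. The paper also tests linear functions, but it fixes the perturbation scale at $1/C$, using the pair $1\pm z/C$. If one instantiates the UWP inequality
\[
\|x+y\|^r_{\mathscr{M}_p}+C\|x-y\|^r_{\mathscr{M}_p}\ \geq\ 2^{r-1}\bigl(\|x\|^r_{\mathscr{M}_p}+\|y\|^r_{\mathscr{M}_p}\bigr)
\]
literally at $x=1+z/C$, $y=1-z/C$, the result is $1+C^{1-r}\geq (1+1/C)^r$, which is \emph{not} absurd for every admissible choice of parameters: for $r=3/2$ and $C=100$ it reads $1.1\geq (1.01)^{3/2}\approx 1.015$, which is true. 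So the scale of the perturbation must be taken small depending on both $C$ and $r$, and that is exactly what your argument supplies: UWP on your pair reduces to $1+Ct^r\geq(1+t)^r$, and combining with Bernoulli's inequality $(1+t)^r\geq 1+rt$ forces $Ct^{r-1}\geq r$, which fails once $t<(r/C)^{1/(r-1)}$. In this respect your limiting argument is tighter than the computation displayed in the paper, whose left-hand coefficient $C^r$ and right-hand factor $2^{r-1}$ do not match the definition of UWP as stated; your free parameter $t\to 0^+$ repairs that step. Your alternative abstract route is also sound and is essentially the heuristic the paper voices just before the theorem: LWP and UWP are universally quantified inequalities, hence inherited by subspaces, and by \cite[Proposition 3.1]{CR} they imply uniform convexity, respectively uniform smoothness, neither of which two-dimensional $\ell^1$ possesses. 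That version buys generality — no Banach space containing an isometric copy of two-dimensional $\ell^1$ can satisfy either law — at the cost of invoking the structural consequences of the weak parallelogram laws rather than bare arithmetic.
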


\begin{proof}
If 
\[
   \|1+z\|^r_{\mathscr{M}_p} + C\|1-z\|^r_{\mathscr{M}_p} \leq 2^{r-1}(\|1\|_{\mathscr{M}_p}^r + \|z\|^r_{\mathscr{M}_p} )
\]
holds, then
\[
     (1 + C)2^r \leq 2^{r-1}(2),
\]
which forces $C=0$.  Thus LWP fails.

Similarly, for $C>0$ we have
\[
     \|1\|^r_{\mathscr{M}_p} + C^r\|z/C\|^r_{\mathscr{M}_p}  \geq 2^{r-1}(\|1+z/C\|^r_{\mathscr{M}_p}+\|1-z/C\|^r_{\mathscr{M}_p})
\]
implies
\[
    2  \geq 2^{r-1}\cdot 2\cdot(1 +1/C)^r,
\]
which is absurd when $1<r<\infty$.  Therefore UWP also fails.
\end{proof}

\begin{Theorem}
    Let $1<p<\infty$.  The space $\mathscr{M}_p$ fails to satisfy either of the Pythagorean inequalities for any parameters $r>1$ and $K>0$.
\end{Theorem}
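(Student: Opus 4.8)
The plan is to exploit the fact, recorded in Corollary \ref{linmultell1}, that the two-dimensional subspace of linear functions $\alpha + \beta z$ carries the $\ell^1$ norm $|\alpha| + |\beta|$ under $\|\cdot\|_{\mathscr{M}_p}$. Since Pythagorean-type relations already fail on $\ell^1$, this subspace should supply the desired counterexamples. The essential simplification is that whenever $x$ and $y$ are linear, so is $x + \beta y$ for every scalar $\beta$; hence the Birkhoff--James condition $\|x + \beta y\|_{\mathscr{M}_p} \geq \|x\|_{\mathscr{M}_p}$ need only be checked using the $\ell^1$ formula of Corollary \ref{linmultell1}, even though orthogonality is a statement about all of $\mathscr{M}_p$. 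I would also stress at the outset that the two inequalities require structurally different counterexamples: \eqref{LPyth} can be defeated by a single orthogonal pair (forcing $K \leq 0$), whereas \eqref{UPyth}, to fail for every $K$, requires a one-parameter family along which the relevant ratio tends to infinity.

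For the lower inequality \eqref{LPyth}, I would take $x = 1 + z$ and $y = 1 - z$. For any scalar $\beta$ one has $x + \beta y = (1 + \beta) + (1 - \beta)z$, so by Corollary \ref{linmultell1} we get $\|x + \beta y\|_{\mathscr{M}_p} = |1 + \beta| + |1 - \beta| \geq |(1+\beta)+(1-\beta)| = 2 = \|x\|_{\mathscr{M}_p}$, which establishes $x \perp_{\mathscr{M}_p} y$. Since $x + y = 2$ is constant, $\|x + y\|_{\mathscr{M}_p} = 2$, while $\|x\|_{\mathscr{M}_p} = \|y\|_{\mathscr{M}_p} = 2$. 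Substituting into \eqref{LPyth} would yield $2^r + K\,2^r \leq 2^r$, forcing $K \leq 0$; hence \eqref{LPyth} fails for every $r > 1$ and every $K > 0$.

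For the upper inequality \eqref{UPyth}, I would use the family $x = 1$ and $y = t z$ with $t \in (0,1)$. Here $\|x\|_{\mathscr{M}_p} = 1$ and $\|y\|_{\mathscr{M}_p} = t$ (directly from Corollary \ref{linmultell1}, or since $M_z$ is an isometry so that $\|z\|_{\mathscr{M}_p} = 1$), and orthogonality $x \perp_{\mathscr{M}_p} y$ holds because $\|1 + \beta t z\|_{\mathscr{M}_p} = 1 + |\beta|\,t \geq 1$. Moreover $\|x + y\|_{\mathscr{M}_p} = \|1 + tz\|_{\mathscr{M}_p} = 1 + t$ by Corollary \ref{linmultell1}. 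Thus \eqref{UPyth} would demand $K \geq \big((1 + t)^r - 1\big)/t^r$ for all such $t$; but since $r > 1$, this ratio behaves like $r\,t^{1-r}$ as $t \to 0^+$ and therefore tends to $+\infty$, so no finite $K$ can work.

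The computations themselves are routine; the main point to get right is the reduction step, namely confirming that Birkhoff--James orthogonality in $\mathscr{M}_p$ among linear functions coincides with the elementary $\ell^1$ computation. This hinges on the closure of the linear subspace under the maps $x \mapsto x + \beta y$ together with Corollary \ref{linmultell1}. The only genuinely non-cosmetic observation is that \eqref{UPyth} cannot be broken by any single pair and requires the degenerating family $y = tz$, $t \to 0^+$, where the exponent $r > 1$ is precisely what forces the blow-up.
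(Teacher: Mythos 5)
Your proof is correct and takes essentially the same route as the paper: both arguments defeat \eqref{LPyth} using the orthogonal pair $1+z$, $1-z$ and defeat \eqref{UPyth} using the degenerating family $1$, $tz$ as $t \to 0^+$, with all norms computed via Corollary \ref{linmultell1}. The only cosmetic differences are that you deduce $1 \perp_{\mathscr{M}_p} z$ directly from Corollary \ref{linmultell1} (the paper instead runs an even-power test-function computation), and you use the single pair $(1+z,\,1-z)$ to force $K \leq 0$ where the paper phrases the same conclusion as a limit over the pairs $(1+z,\, c(1-z))$ with $c \to 0^+$.
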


\begin{proof}
Fix $c\neq 0$.  Let $\phi(z) = 1 + cz$, and consider $f(z) \in \ell^p_A$ of the form $f(z) = f_0 + f_2 z^2 + f_4 z^4 + \cdots$.  Then
\begin{align*}
   \|  \phi(z)f(z)\|_p^p &=   \|(1 + cz)(f_0 + f_2 z^2 + f_4 z^4 + \cdots)\|^p_p \\
        &=  \|(f_0 + f_2 z^2 + f_4 z^4 + \cdots) +  cz(f_0 + f_2 z^2 + f_4 z^4 + \cdots)\|_p^p \\
        &=   |f_0|^p + |c|^p|f_0|^p + |f_1|^p + |c|^p|f_1|^p + |f_2|^p + |c|^p|f_2|^p \\
        &=   \|f\|_p^p  +  |c|^p \|f\|_p^p\\
        &\geq \|f\|_p^p.
\end{align*}

This shows that $\| 1 + cz\|_{\mathscr{M}_p} \geq \|1\|_{\mathscr{M}_p}$ for all constants $c$, or $1  \perp_{\mathscr{M}_p}  z$.  By considering the limit
\[
      \lim_{c \rightarrow 0} \frac{\|1+cz\|_{\mathscr{M}_p}^r - \|1\|_{\mathscr{M}_p}^r}{\|cz\|_{\mathscr{M}_p}^r} = \lim_{c\rightarrow 0} \frac{(1 + |c|)^{r} - 1^r}{|c|^r},
\]
we see that $\mathscr{M}_p$ fails to satisfy \eqref{UPyth}, as $K=\infty$ would be forced.

Next, note  that for $c\neq 0$, we have
\begin{align*}
     \|(1+z) + c(1-z)\|_{\mathscr{M}_p}  &=  \|(1 + c) + (1-c)z\|_{\mathscr{M}_p}   \\
             &=  |1+c| + |1-c| \\
             &\geq 2\\
             &=  \|1+z\|_1 \\
             &= \|1+z\|_{\mathscr{M}_p}.
\end{align*}

This shows that $1+z \perp_{\mathscr{M}_p} 1-z$.  Next, consider
\[
      \frac{\|(1+z) + c(1-z)\|_{\mathscr{M}_p}^r - \|1+z\|_{\mathscr{M}_p}^r}{\|c(1-z)\|_{\mathscr{M}_p}^r}= \frac{(|1+c|+|1-c|)^r  -  2^r}{2|c|^r},
\]
where $1<r<\infty$.

This tends toward zero as $c \rightarrow 0^+$, which would require $K=0$.  Thus, \eqref{LPyth} fails to hold.
\end{proof}


\section{Functionals on $\mathscr{M}_p$}

Let $1<p<\infty$.  Suppose that $\lambda = (\lambda_0, \lambda_1, \lambda_2,\ldots)$ is a sequence of complex numbers such that for some $C>0$ we have
\[
      | \lambda_0 \phi_0 + \lambda_1 \phi_1 + \lambda_2 \phi_2 +\cdots| \leq C \|\phi\|_{\mathscr{M}_p}
\]
for all $\phi(z) = \sum_{k=0}^{\infty} \phi_k z^k \in \mathscr{M}_p$.   Then $\lambda$ determines a bounded linear functional on $\mathscr{M}_p$ with norm at most $C$.  Let us give the name $\mathscr{S}=\mathscr{S}_p$ to the collection of functionals arising in this manner.  It is a nonempty collection, since it contains all of $\ell^{p'}_A$.   Thus $\mathscr{S}$ is a linear manifold within $\mathscr{M}_p^*$, the continuous dual space of $\mathscr{M}_p$.

If 
$\lambda = (\lambda_0, \lambda_1, \lambda_2,\ldots)\in \mathscr{S}$, then $\lambda_k = \lambda(z^k)$ and the pairing
\[
       \lambda(\phi) = \sum_{k=0}^{\infty}  \lambda_k \phi_k.
\]
applies for all $\phi \in \mathscr{M}_p$.

Trivially, we can bound the norm of $\lambda$ as follows:
\begin{equation}\label{mpstarnormineq}
     \|\lambda\|_{p'}  =  \sup_{\phi\neq 0}\frac{|\lambda(\phi)|}{\|\phi\|_p} \geq \sup_{\phi\neq 0}\frac{|\lambda(\phi)|}{\|\phi\|_{\mathscr{M}_p}} = \|\lambda\|_{\mathscr{M}_p^*}
     \geq \sup_{\phi\neq 0}\frac{|\lambda(\phi)|}{\|\phi\|_1} = \|\lambda\|_{\infty},
\end{equation}
possibly with $\infty$ on the left side. 

Of course since $\mathscr{M}_p \subseteq \ell^p_A \cap \ell^{p'}_A$ we also have
\[
     \|\lambda\|_p  \geq \|\lambda\|_{\mathscr{M}_p^*},
\]
again, with the left side possibly being infinite.

Taking the $\langle p-1 \rangle$ power does something natural in this context.
\begin{Proposition} Let $1<p<\infty$. 
    If $\phi \in \mathscr{M}_p$, then $\phi^{\langle p-1 \rangle} \in \mathscr{S}$.
\end{Proposition}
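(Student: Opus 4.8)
The plan is to reduce the claim to two facts already recorded in the excerpt: that $\mathscr{S}$ contains every element of $\ell^{p'}_A$, and that the $\langle p-1\rangle$ power carries $\ell^p_A$ into $\ell^{p'}_A$. Since the first Proposition of this section gives $\mathscr{M}_p \subseteq H^\infty \cap \ell^p_A \cap \ell^{p'}_A$, any $\phi \in \mathscr{M}_p$ already lies in $\ell^p_A$. So it suffices to show $\phi^{\langle p-1\rangle} \in \ell^{p'}_A$ and then invoke the membership $\ell^{p'}_A \subseteq \mathscr{S}$ noted when $\mathscr{S}$ was introduced.

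First I would verify the integrability. Writing $\phi(z) = \sum_{k=0}^{\infty} \phi_k z^k$, the defining property $|\alpha^{\langle s\rangle}| = |\alpha|^s$ gives $|\phi_k^{\langle p-1\rangle}| = |\phi_k|^{p-1}$. Because $(p-1)p' = p$, I would compute
\[
\sum_{k=0}^{\infty} |\phi_k^{\langle p-1\rangle}|^{p'} = \sum_{k=0}^{\infty} |\phi_k|^{(p-1)p'} = \sum_{k=0}^{\infty} |\phi_k|^{p} = \|\phi\|_p^{p} < \infty,
\]
so $\phi^{\langle p-1\rangle} \in \ell^{p'}_A$ with $\|\phi^{\langle p-1\rangle}\|_{p'} = \|\phi\|_p^{p-1}$. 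This is precisely the observation recorded after \eqref{uewuweuer} that $f \in \ell^p_A$ forces $f^{\langle p-1\rangle} \in \ell^{p'}$; note also that $\phi^{\langle p-1\rangle}$ involves only nonnegative powers of $z$, so it genuinely lies in the analytic space $\ell^{p'}_A$.

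To close the loop I would make the functional bound explicit rather than merely citing $\ell^{p'}_A \subseteq \mathscr{S}$. For any test multiplier $\psi(z) = \sum_{k=0}^{\infty} \psi_k z^k \in \mathscr{M}_p$ we have $\psi \in \ell^p_A$, so H\"older's inequality applied to the dual pairing yields
\[
\Big| \sum_{k=0}^{\infty} \phi_k^{\langle p-1\rangle} \psi_k \Big| \leq \|\phi^{\langle p-1\rangle}\|_{p'}\, \|\psi\|_p \leq \|\phi\|_p^{p-1}\, \|\psi\|_{\mathscr{M}_p},
\]
where the final inequality uses $\|\psi\|_p \leq \|\psi\|_{\mathscr{M}_p}$ from Proposition~\ref{elempropmult}. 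This is exactly the defining bound for membership in $\mathscr{S}$, with admissible constant $C = \|\phi\|_p^{p-1}$, so $\phi^{\langle p-1\rangle} \in \mathscr{S}$; equivalently, this is the comparison $\|\lambda\|_{\mathscr{M}_p^*} \leq \|\lambda\|_{p'}$ from \eqref{mpstarnormineq} specialized to $\lambda = \phi^{\langle p-1\rangle}$.

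I do not expect a genuine obstacle: the proposition is a bookkeeping consequence of the chain $\mathscr{M}_p \subseteq \ell^p_A$, then $(\cdot)^{\langle p-1\rangle}\colon \ell^p_A \to \ell^{p'}_A$, then $\ell^{p'}_A \subseteq \mathscr{S}$. The one point worth stating with care is the exponent arithmetic $(p-1)p' = p$, which is exactly what makes $\phi^{\langle p-1\rangle}$ land in the dual space $\ell^{p'}_A$ rather than in some larger sequence space, and hence act as a bounded functional.
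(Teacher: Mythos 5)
Your proof is correct and follows the same route as the paper's own (very brief) argument: $\phi \in \mathscr{M}_p$ lies in $\ell^p_A$, the exponent arithmetic $(p-1)p'=p$ puts $\phi^{\langle p-1\rangle}$ in $\ell^{p'}_A$, and then \eqref{mpstarnormineq} gives membership in $\mathscr{S}$. You have simply made explicit the computations the paper leaves implicit, including the useful norm identity $\|\phi^{\langle p-1\rangle}\|_{p'} = \|\phi\|_p^{p-1}$.
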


\begin{proof}
    In this situation, $\phi^{\langle p-1 \rangle}  \in \ell^{p'}_A$, and hence $\phi^{\langle p-1 \rangle} \in \mathscr{S}$, by \eqref{mpstarnormineq}.
\end{proof}


Members of $\mathscr{S}$ might not have radial boundary limits, but they do satisfy the following growth condition, which can also be interpreted as boundedness of point evaluation.
\begin{Proposition} Let $1<p<\infty$. 
    If $\lambda \in \mathscr{S}$, then 
    \[
        |\lambda(w)| \leq  \frac{\|\lambda\|_{\mathscr{M}^*_p}}{(1 - |w|^p)^{1/p}}, \ \, w \in \mathbb{D}.
\] 
\end{Proposition}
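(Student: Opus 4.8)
The plan is to realize the number $\lambda(w)=\sum_{k\ge 0}\lambda_k w^k$ as the value of the functional $\lambda$ on a single explicit multiplier, and then read the estimate off from the definition of $\|\lambda\|_{\mathscr{M}_p^*}$. The natural candidate is the Cauchy kernel $k_w(z)=\sum_{k=0}^{\infty} w^k z^k=(1-wz)^{-1}$. First I would verify that $k_w$ really belongs to $\mathscr{M}_p$: since $\sum_k |w|^k=(1-|w|)^{-1}<\infty$ we have $k_w\in\ell^1_A$, and $\ell^1_A=\mathscr{M}_1\subseteq\mathscr{M}_p$ with $\|k_w\|_{\mathscr{M}_p}\le\|k_w\|_1$ by Proposition \ref{elempropmult}. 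Next I would check that the polynomial partial sums $s_N(z)=\sum_{k=0}^{N} w^k z^k$ converge to $k_w$ in the multiplier norm: the tail equals $w^{N+1}z^{N+1}k_w$, whose multiplier norm is at most $|w|^{N+1}\|k_w\|_{\mathscr{M}_p}\to 0$. Hence $\lambda(k_w)=\lim_N \lambda(s_N)=\lim_N \sum_{k=0}^{N}\lambda_k w^k=\lambda(w)$, so that $\lambda(w)=\lambda(k_w)$.

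With this identity in hand, the definition of the functional norm gives at once $|\lambda(w)|=|\lambda(k_w)|\le \|\lambda\|_{\mathscr{M}_p^*}\,\|k_w\|$, and the whole content of the proposition is concentrated in evaluating the correct norm of the kernel as $(1-|w|^p)^{-1/p}$. To extract this sharp constant I would \emph{not} bound $k_w$ in $\mathscr{M}_p$ directly, since that route yields only the crude $\|k_w\|_1=(1-|w|)^{-1}$; instead I would route point evaluation through the contractive inclusion $\mathscr{M}_p\hookrightarrow \ell^{p'}_A$. Recall from the earlier propositions that $\mathscr{M}_p=\mathscr{M}_{p'}$ isometrically and that $\|\phi\|_{p'}\le\|\phi\|_{\mathscr{M}_{p'}}=\|\phi\|_{\mathscr{M}_p}$, so that the growth of point evaluation at $w$ is governed by its size on $\ell^{p'}_A=(\ell^p_A)^*$. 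A single application of H\"older's inequality, pairing $\ell^{p'}$ against the sequence $(w^k)_k\in\ell^p$, produces the sharp value $\|(w^k)_k\|_{\ell^p}=(1-|w|^p)^{-1/p}$, which is exactly the asserted constant and exhibits the self-duality of the $\ell^p$-scale.

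The main obstacle is precisely this last identification of the constant. The immediate pairing bound delivers the multiplier norm $\|k_w\|_{\mathscr{M}_p}$, and the delicate point is to argue that, for the purpose of the growth estimate, the relevant quantity is instead the smaller $\ell^p$-norm $(1-|w|^p)^{-1/p}$ coming from the duality $(\ell^p_A)^*=\ell^{p'}_A$ together with the isometry $\mathscr{M}_p=\mathscr{M}_{p'}$. I expect the careful bookkeeping of these two competing norms on the kernel—and in particular the verification that the $\ell^{p'}_A$-duality, rather than the coarser embedding $\mathscr{M}_p\hookrightarrow\ell^1_A$, controls the quantity—to be the crux; the remaining steps (membership of $k_w$ in $\mathscr{M}_p$, convergence of the partial sums in multiplier norm, and the closing H\"older estimate) are routine.
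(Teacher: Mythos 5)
Your first paragraph is exactly the paper's own proof: pair $\lambda$ with the kernel $k_w(z)=(1-wz)^{-1}$, which lies in $\ell^1_A\subseteq\mathscr{M}_p$, so that $\lambda(w)=\lambda(k_w)$ and $|\lambda(w)|\le\|\lambda\|_{\mathscr{M}_p^*}\|k_w\|_{\mathscr{M}_p}$. The step you then single out as the crux and leave open --- replacing $\|k_w\|_{\mathscr{M}_p}$ by $(1-|w|^p)^{-1/p}$ --- is a genuine gap, and neither route you sketch closes it. H\"older applied to $(\lambda_k)$ and $(w^k)_k$ gives $|\lambda(w)|\le\|\lambda\|_{p'}(1-|w|^p)^{-1/p}$, but by \eqref{mpstarnormineq} the norm $\|\lambda\|_{p'}$ \emph{dominates} $\|\lambda\|_{\mathscr{M}_p^*}$ (it may even be infinite for $\lambda\in\mathscr{S}$), so it cannot be traded for the smaller dual norm. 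The contractive inclusion $\mathscr{M}_p=\mathscr{M}_{p'}\hookrightarrow\ell^{p'}_A$ proves a different statement: $|\phi(w)|\le\|\phi\|_{p'}(1-|w|^p)^{-1/p}\le\|\phi\|_{\mathscr{M}_p}(1-|w|^p)^{-1/p}$ for \emph{multipliers} $\phi$, measured in the multiplier norm. That is a growth bound on elements of $\mathscr{M}_p$, not on functionals $\lambda\in\mathscr{S}$ measured in $\|\cdot\|_{\mathscr{M}_p^*}$, and neither statement yields the other; your write-up slides between the two.

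In fact the gap cannot be closed, because the stated inequality is false, so your instinct about where the difficulty sits was sound. Take $\lambda=(r^k)_{k\ge0}$ with $0<r<1$, i.e.\ $\lambda(\phi)=\phi(r)$. For the evaluation functional $e_w(f)=f(w)$ on $\ell^p_A$ (bounded and nonzero) one has $e_w\circ M_\phi=\phi(w)\,e_w$, whence $|\phi(w)|\le\|\phi\|_{\mathscr{M}_p}$ for every multiplier; therefore $\lambda\in\mathscr{S}$ with $\|\lambda\|_{\mathscr{M}_p^*}\le1$. But $\lambda(z)=(1-rz)^{-1}$, so $\lambda(r)=(1-r^2)^{-1}$, which grows like $(1-r)^{-1}$ as $r\to1^-$, whereas the claimed bound allows only growth of order $(1-r^p)^{-1/p}\sim (1-r)^{-1/p}$; the inequality fails for $r$ near $1$, for every $p\in(1,\infty)$. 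What the pairing argument (yours and the paper's) honestly yields is
\[
   |\lambda(w)|\;\le\;\|\lambda\|_{\mathscr{M}_p^*}\,\|k_w\|_{\mathscr{M}_p}\;=\;\frac{\|\lambda\|_{\mathscr{M}_p^*}}{1-|w|},
\]
since after the rotation used in the proof of Corollary \ref{linmultell1} the kernel has nonnegative coefficients, and then Proposition \ref{elempropmult} gives $\|k_w\|_{\mathscr{M}_p}=\|k_w\|_1=(1-|w|)^{-1}$. Note that $\|k_w\|_{\mathscr{M}_p}$ strictly exceeds $\|k_w\|_p=(1-|w|^p)^{-1/p}$ for $w\ne0$ (consistent with Theorem \ref{ExtremMultMonom}, since $k_w$ is not a monomial), so the sharper exponent is unobtainable by this route or any other: the correct denominator in the proposition, and the one actually delivered by the paper's own one-line proof, is $1-|w|$ rather than $(1-|w|^p)^{1/p}$.
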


\begin{proof}
The point evaluation functional at any point $w$ of the disk is a multiplier on $\ell^p_A$, since its coefficients are absolutely summable.  Apply $\lambda \in \mathscr{S}$ to point evaluation at $w$ to see that $\lambda$ satisfies the growth condition shown above.
\end{proof}

It turns out that difference quotients are bounded on $\mathscr{S}$.
\begin{Proposition}
    Let $1<p<\infty$.
    If $\lambda \in \mathscr{S}$, and $w \in \mathbb{D}$, then $Q_w \lambda \in \mathscr{S}$, and
    \[
          \|Q_w \lambda \|_{\mathscr{M}_p^*} \leq \frac{\|\lambda\|_{\mathscr{M}_p^*}}{1 - |w|}.
    \]
\end{Proposition}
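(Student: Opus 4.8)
The plan is to realize $Q_w$ acting on $\mathscr{S}$ as the Banach-space adjoint of a multiplication operator on $\mathscr{M}_p$, and then to read off the norm bound from submultiplicativity of the multiplier norm together with Proposition \ref{elempropmult}. For $\phi(z) = \sum_k \phi_k z^k \in \mathscr{M}_p$, set
\[
R_w \phi(z) := \frac{z\,\phi(z)}{1 - wz}.
\]
Since $z/(1-wz) \in \ell^1_A \subset \mathscr{M}_p$ and products of multipliers are multipliers, $R_w$ maps $\mathscr{M}_p$ into itself.

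First I would establish the adjoint identity $(Q_w\lambda)(\phi) = \lambda(R_w\phi)$. Writing the difference-quotient sequence explicitly, one finds $(Q_w\lambda)_k = \sum_{m\geq 0} w^m \lambda_{k+1+m}$, which is the $k$-th Maclaurin coefficient of $\tfrac{\lambda(z)-\lambda(w)}{z-w}$. Introducing the bounded functional $\mu(\phi) := \lambda(R_w\phi)$, a direct expansion of $\tfrac{1}{1-wz} = \sum_{m\geq 0} w^m z^m$ gives
\[
\mu(z^k) = \lambda\big(R_w z^k\big) = \lambda\Big(\textstyle\sum_{m\geq 0} w^m z^{k+1+m}\Big) = \sum_{m\geq 0} w^m \lambda_{k+1+m} = (Q_w\lambda)_k,
\]
so $\mu$ is a bounded functional whose sequence of values on monomials is exactly $Q_w\lambda$. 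Collecting powers of $z$ shows that for polynomials $\phi$ the pairing $\sum_k (Q_w\lambda)_k \phi_k$ is a finite sum equal to $\mu(\phi)$, which identifies $Q_w\lambda$ as a member of $\mathscr{S}$ representing $\mu$.

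Next I would bound the multiplier norm of the symbol $z/(1-wz)$. Its coefficients are $w^{k-1}$ for $k\geq 1$ (and $0$ for $k=0$), so
\[
\left\| \frac{z}{1-wz} \right\|_1 = \sum_{k\geq 1} |w|^{k-1} = \frac{1}{1-|w|},
\]
and Proposition \ref{elempropmult} yields $\bigl\| z/(1-wz) \bigr\|_{\mathscr{M}_p} \leq \tfrac{1}{1-|w|}$. Since the multiplier norm is submultiplicative (because $M_{\psi\phi} = M_\psi M_\phi$),
\[
\| R_w\phi \|_{\mathscr{M}_p} = \left\| \frac{z}{1-wz}\,\phi \right\|_{\mathscr{M}_p} \leq \left\| \frac{z}{1-wz} \right\|_{\mathscr{M}_p} \|\phi\|_{\mathscr{M}_p} \leq \frac{\|\phi\|_{\mathscr{M}_p}}{1-|w|}.
\]
Combining the two ingredients finishes the argument: for every $\phi \in \mathscr{M}_p$,
\[
|(Q_w\lambda)(\phi)| = |\lambda(R_w\phi)| \leq \|\lambda\|_{\mathscr{M}_p^*}\,\|R_w\phi\|_{\mathscr{M}_p} \leq \frac{\|\lambda\|_{\mathscr{M}_p^*}}{1-|w|}\,\|\phi\|_{\mathscr{M}_p},
\]
so $Q_w\lambda \in \mathscr{S}$ with the claimed estimate.

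The main obstacle is the first step. A generic $\phi \in \mathscr{M}_p$ need not belong to $\ell^1_A$, so the Fubini-type interchange of summation hidden in the adjoint identity cannot be justified by absolute convergence in general. This is why I would route the argument through the genuinely bounded functional $\mu = \lambda \circ R_w$: the coefficient matching above and the finite computation for polynomials legitimately identify $Q_w\lambda$ as the representing sequence of $\mu$, and the desired inequality is then inherited from the operator bound on $R_w$, sidestepping any delicate rearrangement for non-summable $\phi$.
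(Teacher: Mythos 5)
Your second and third ingredients are correct and coincide with the paper's own estimate. The paper expands $(Q_w\lambda)(\phi)$ as $\lambda(S\phi)+w\lambda(S^2\phi)+w^2\lambda(S^3\phi)+\cdots$, where $S$ is the forward shift (an isometry of $\mathscr{M}_p$), and sums the geometric series to get $\|\lambda\|_{\mathscr{M}_p^*}\|\phi\|_{\mathscr{M}_p}/(1-|w|)$. Since $\sum_{m\geq 0}w^m S^{m+1}\phi$ converges in multiplier norm to $z\phi/(1-wz)=R_w\phi$, that series is literally your functional $\mu=\lambda\circ R_w$, and your route through $\|z/(1-wz)\|_1$ and submultiplicativity gives the identical bound. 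So in substance your proof is the paper's proof, repackaged as an adjoint identity.

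The problem is the step you yourself flag as the main obstacle: your sidestep does not close it. By the paper's definition, $Q_w\lambda\in\mathscr{S}$ means that the series $\sum_j (Q_w\lambda)_j\phi_j$ converges, with the stated bound, for \emph{every} $\phi\in\mathscr{M}_p$, not just for polynomials. Knowing that $\mu$ is bounded and that $\mu(z^k)=(Q_w\lambda)_k$ does not give this: polynomials are not dense in $\mathscr{M}_p$ (a multiplier-norm limit of polynomials is continuous on $\overline{\mathbb{D}}$, because $\|\cdot\|_{H^\infty}\leq\|\cdot\|_{\mathscr{M}_p}$, whereas $\mathscr{M}_p$ contains multipliers, e.g.\ certain infinite Blaschke products, with no continuous boundary extension), so a bounded functional on $\mathscr{M}_p$ is not determined by its values on monomials, and agreement on polynomials says nothing about convergence of the pairing elsewhere. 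Concretely, the $N$-th partial sum of the pairing equals $\mu(P_N\phi)=\lambda(R_w P_N\phi)$, where $P_N$ denotes Taylor truncation; since $P_N\phi\to\phi$ in $\mathscr{M}_p$ would force $\phi$ to lie in the disk algebra, continuity of $\mu$ does not yield $\mu(P_N\phi)\to\mu(\phi)$, and your final display, which asserts $(Q_w\lambda)(\phi)=\lambda(R_w\phi)$ for all $\phi$, is exactly the unproven claim. In fairness, the paper negotiates this same point by a formal interchange of the order of summation in $\sum_j\sum_{k>j}\lambda_k\phi_j w^{k-j-1}$, which is likewise not justified by absolute convergence when $\phi\notin\ell^1_A$; you have correctly located the crux, but replacing that interchange with a polynomial-density argument trades one gap for another rather than closing it.
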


\begin{proof}
It is easy to see that if $\phi \in \mathscr{M}_p$, then $S^k \phi$ belongs to $\mathscr{M}_p$ for all $k\geq 0$, with equal norms.
We now calculate
\begin{align*}
      (Q_w \lambda)(\phi)  &=  \Bigg(\frac{\sum \lambda_k z^k - \sum \lambda_k w^k}{z-w}\Bigg)(\phi)  \\
           &=  \Big(\sum_{k=1}^{\infty} \lambda_k(z^{k-1} + z^{k-2}w + \cdots + w^{k-1}) \Big)(\phi)\\
           &= \Big( \sum_{k=1}^{\infty} \sum_{j=0}^{k-1} \lambda_k z^j w^{k-j-1}\Big)(\phi)\\
           &=  \sum_{k=1}^{\infty} \sum_{j=0}^{k-1} \lambda_k \phi_j w^{k-j-1} \\
           &=  \lambda_1(\phi_0)\\
           &\quad\, + \lambda_2(\phi_0 w + \phi_1)\\
           &\quad\, + \lambda_3(\phi_0 w^2 + \phi_1 w + \phi_2)\\
           &\quad\, + \cdots\\
           &= \lambda(S\phi) + w \lambda(S^2\phi) + w^2 \lambda(S^3\phi) + \cdots.
\end{align*}

From this we obtain
\begin{align*}
     |(Q_w \lambda)(\phi)|  &\leq   \|\lambda\|_{\mathscr{M}_p^*} \|S \phi\|_{\mathscr{M}_p} +  |w|\|\lambda\|_{\mathscr{M}_p^*} \|S^2 \phi\|_{\mathscr{M}_p}
     +|w|^2\|\lambda\|_{\mathscr{M}_p^*} \|S^3 \phi\|_{\mathscr{M}_p}+\cdots \\
     &= \frac{\|\lambda\|_{\mathscr{M}_p^*}\|\phi\|_{\mathscr{M}_p}}{1 - |w|},
\end{align*}
which proves the claim.
\end{proof}

Let us add that the weak parallelogram laws and the Pythagorean inequalities must fail on $\mathscr{M}_p^*$ as well.  This is because it contains a subspace that is isomorphic to $\ell^{\infty}_A(\{0,1\})$.  
Furthermore we see that $\mathscr{M}_p$ fails to be smooth.  For example, the multiplier $1$ is normed by both $1$ and $1+z$ in $\mathscr{M}_p^*$.


\section{The extremal multipliers on $\ell^p_A$}

Recall that if $\phi \in \mathscr{M}_p$, then $\|\phi\|_{\mathscr{M}_p} \geq \|\phi\|_p$.   We say that the multiplier $\phi$ is $\it extremal$ if equality holds, that is,
\[
     \|\phi\|_{\mathscr{M}_p} = \|\phi\|_p.
\]

For $\ell^2_A = H^2$, the multipliers are the bounded analytic functions on $\mathbb{D}$, and the extremal multipliers are exactly the constant multiples of inner functions.  Indeed, if 
\[
     \sup_{z \in \mathbb{D}} |\phi(z)| = \sup_{0<r<1} \int_{\mathbb{T}} |\phi(re^{i\theta})|^2\,\frac{d\theta}{2\pi},
\]
then $|\phi(e^{i\theta})| = \|\phi\|_{H^{\infty}}$ a.e.\ is forced. The reverse implication is similarly trivial.

For $\ell^p_A$, $p\neq 2$, it would therefore be plausible to guess that the extremal multipliers are the $p$-inner functions.  However, this is incorrect.
It was shown in \cite{Chengetal1} that for $2<p<\infty$, there are $p$-inner functions whose zero sets fail to be Blaschke sequences.  Such a $p$-inner function cannot be a multiplier of $\ell^p_A$, since it would also have to belong to $\ell^{p'}_A$.
In the paper \cite{Che}  $p$-inner functions are constructed whose zero sets accumulate at every point of the boundary circle $\mathbb{T}$.    However, by \cite[Corollary 12.6.3]{CMR},  a multiplier on $\ell^p_A$  for $p \in [1,2)$ has unrestricted limits almost everywhere on $\mathbb{T}$.  A $p$-inner function thus described cannot therefore be a multiplier on $\ell^p_A$.  

More can be said when $p \neq 2$.  First, the extremality of a multiplier is inherited by its conjugate in the following sense.
\begin{Proposition}
   Let $\phi \in \mathscr{M}_p$.  If $\|\phi\|_{\mathscr{M}_p} = \|\phi\|_p$, then $\|\lambda\|_{\mathscr{M}_p^*} = \|\lambda\|_{p'}$, where $\lambda = \phi^{\langle p-1 \rangle}$.
\end{Proposition}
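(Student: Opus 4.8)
The plan is to sandwich $\|\lambda\|_{\mathscr{M}_p^*}$ between $\|\lambda\|_{p'}$ from above and below. The upper bound $\|\lambda\|_{\mathscr{M}_p^*} \leq \|\lambda\|_{p'}$ is free: it is recorded in \eqref{mpstarnormineq} and holds for every $\lambda \in \mathscr{S}$, with no use of extremality. So the entire task reduces to producing a matching lower bound, and the natural way to do this is to test the functional $\lambda$ against a single well-chosen multiplier. My choice would be $\phi$ itself, since $\lambda = \phi^{\langle p-1\rangle}$ is built from $\phi$ and the pairing of $\lambda$ with $\phi$ collapses to a norm.

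First I would pin down the target value. Because $\lambda_k = \phi_k^{\langle p-1\rangle}$ satisfies $|\lambda_k| = |\phi_k|^{p-1}$, and because $(p-1)p' = p$, one computes directly that
\[
\|\lambda\|_{p'}^{p'} = \sum_{k=0}^{\infty} |\phi_k|^{(p-1)p'} = \sum_{k=0}^{\infty} |\phi_k|^{p} = \|\phi\|_p^p,
\]
so $\|\lambda\|_{p'} = \|\phi\|_p^{p-1}$. Next I would evaluate $\lambda$ on $\phi$. Using $\alpha^{\langle p-1\rangle}\alpha = |\alpha|^p$ (equivalently, the identity $\langle \phi, \phi^{\langle p-1\rangle}\rangle = \|\phi\|_p^p$ recorded earlier), we get $\lambda(\phi) = \sum_{k} \phi_k^{\langle p-1\rangle}\phi_k = \|\phi\|_p^p$, whence
\[
\|\lambda\|_{\mathscr{M}_p^*} \geq \frac{|\lambda(\phi)|}{\|\phi\|_{\mathscr{M}_p}} = \frac{\|\phi\|_p^p}{\|\phi\|_{\mathscr{M}_p}}.
\]
This is the point at which the hypothesis enters: extremality lets me replace $\|\phi\|_{\mathscr{M}_p}$ by $\|\phi\|_p$, giving $\|\lambda\|_{\mathscr{M}_p^*} \geq \|\phi\|_p^{p-1} = \|\lambda\|_{p'}$. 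Together with the reverse inequality from \eqref{mpstarnormineq}, this forces $\|\lambda\|_{\mathscr{M}_p^*} = \|\lambda\|_{p'}$, which is the claim.

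There is no serious computational obstacle here; the work is over once the right test vector is identified. The one point genuinely requiring care is to recognize that extremality is exactly what is needed to make the test sharp. Without the assumption $\|\phi\|_{\mathscr{M}_p} = \|\phi\|_p$, testing against $\phi$ yields only $\|\phi\|_p^p/\|\phi\|_{\mathscr{M}_p}$, which falls strictly short of $\|\lambda\|_{p'} = \|\phi\|_p^{p-1}$; thus the argument uses the extremality of $\phi$ rather than merely its multiplier structure, and the exponent arithmetic $(p-1)p' = p$ is what aligns the two computations.
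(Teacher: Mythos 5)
Your proof is correct and is essentially the paper's argument in a cleaner arrangement: both hinge on the identity $\langle \phi, \phi^{\langle p-1\rangle}\rangle = \|\phi\|_p^p$ (i.e., testing $\lambda$ against $\phi$ itself), the inequality $\|\lambda\|_{\mathscr{M}_p^*} \leq \|\lambda\|_{p'}$ from \eqref{mpstarnormineq}, and the extremality hypothesis to squeeze the chain of inequalities into equalities. The only cosmetic difference is that the paper arrives at the test vector $\phi^{\langle p-1\rangle}$ by noting that it saturates H\"older's inequality in the sup defining $\|\phi\|_{\mathscr{M}_p}$, whereas you simply choose it outright, which loses nothing.
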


\begin{proof}
     By hypothesis,
     \[
          \|\phi\|_{\mathscr{M}_p} = \sup \frac{|\langle \phi f, g\rangle|}{\|f\|_p \|g\|_{p'}} = \sup \frac{|\langle \phi 1, g\rangle|}{\|1\|_p \|g\|_{p'}} = \|\phi\|_p.
     \]
     This forces $g = \phi^{\langle p-1 \rangle}$, apart from a harmless multiplicative constant.
     
     Since $g \in \ell^{p'}$, we also have $g \in \mathscr{S}$ by \eqref{mpstarnormineq}. Relabeling $g$ as the functional $\lambda$, we have
     \[
         \|\phi\|_p =  \frac{|\langle \phi, g\rangle|}{\|g\|_{p'}}  \leq  \frac{|\langle \phi, \lambda \rangle|}{\|\lambda\|_{\mathscr{M}_p^*}} \leq \|\phi\|_{\mathscr{M}_p}.
     \]
      Equality is forced throughout, and we conclude that 
      \[
            \|\lambda\|_{p'} = \|\lambda\|_{\mathscr{M}_p^*}.
      \]   
\end{proof}
This comes into play in the main result, to which we presently turn.

\begin{Theorem}\label{ExtremMultMonom}
    Let $p\in (1,\infty)\setminus\{2\}$.  A multiplier $\phi \in \mathscr{M}_p$ satisfies $\|\phi\|_{\mathscr{M}_p} = \|\phi\|_p$ if and only if $\phi$ is a monomial.
\end{Theorem}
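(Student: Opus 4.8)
The plan is to prove the two implications separately, the reverse being routine and the forward one carrying all the weight. For the easy direction, suppose $\phi(z)=\gamma z^k$. Multiplication by $z^k$ merely shifts Maclaurin coefficients, so it is an isometry of $\ell^p_A$; hence $\|\phi f\|_p=|\gamma|\,\|f\|_p$ for every $f$, giving $\|\phi\|_{\mathscr{M}_p}=|\gamma|=\|\gamma z^k\|_p=\|\phi\|_p$. For the forward direction I would argue the contrapositive: if $\phi$ is not a monomial, then $\|\phi\|_{\mathscr{M}_p}>\|\phi\|_p$. The organizing device is the identity $\mathscr{M}_p=\mathscr{M}_{p'}$ with $\|\phi\|_{\mathscr{M}_p}=\|\phi\|_{\mathscr{M}_{p'}}$, which lets me work with whichever conjugate exponent exceeds $2$; writing $q=\max\{p,p'\}$, we have $\|\phi\|_{\mathscr{M}_p}=\|\phi\|_{\mathscr{M}_q}$ with $q>2$.

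When $p>2$ (so $q=p$) the argument is a clean squeeze. Using $\|\phi\|_{p'}\leq\|\phi\|_{\mathscr{M}_{p'}}$ from Proposition \ref{elempropmult} (applied with exponent $p'$) together with Proposition \ref{elemresellp} (here $p'<p$, so $\|\phi\|_p\leq\|\phi\|_{p'}$), extremality would give
\[
\|\phi\|_p=\|\phi\|_{\mathscr{M}_p}=\|\phi\|_{\mathscr{M}_{p'}}\geq\|\phi\|_{p'}\geq\|\phi\|_p .
\]
Equality is forced throughout, so $\|\phi\|_{p'}=\|\phi\|_p$, and the equality clause of Proposition \ref{elemresellp} makes $\phi$ a monomial, a contradiction. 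This disposes of every symbol when $p>2$.

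The case $p<2$ (so $q=p'>2$) is where the real work lies, because the chain above degenerates into $\|\phi\|_{\mathscr{M}_{p'}}\geq\|\phi\|_{(p')'}=\|\phi\|_p$, an equality at the level that matters, and no squeeze survives. To force a strict inequality I would test $M_\phi$ on $\ell^{p'}_A$ against the concentrating sequence $f^{(N)}$ with coefficients $f^{(N)}_i=\phi_{N-i}^{\langle p-1\rangle}$ for $0\leq i\leq N$. Since $(p-1)p'=p$, one finds $\|f^{(N)}\|_{p'}=\big(\sum_{m\leq N}|\phi_m|^p\big)^{1/p'}$ while the $N$-th coefficient of $\phi f^{(N)}$ equals $\sum_{m\leq N}|\phi_m|^p$; keeping only that coefficient already yields $\|\phi\|_{\mathscr{M}_{p'}}\geq\|\phi\|_p$. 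The gain must come from the fact that $\phi f^{(N)}$ carries further nonzero coefficients (a nonzero polynomial cannot become a monomial after multiplication by a non-monomial $\phi$, since $\phi$ then has a nonzero root that $f^{(N)}$ cannot cancel), and these contribute extra $\ell^{p'}$-mass. When $\phi$ is a polynomial one takes $N\geq\deg\phi$, so $\sum_{m\leq N}|\phi_m|^p=\|\phi\|_p^p$ exactly and the surplus mass makes the inequality strict, settling the polynomial case.

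The main obstacle is exactly this strictness when $p<2$ and $\phi$ has \emph{infinite} support: there the $N$-th coefficient only tends to $\|\phi\|_p^p$, and one must show the off-diagonal mass of $\phi f^{(N)}$ does not decay faster than the tail $\sum_{m>N}|\phi_m|^p$. A variational computation at the norming vector $f\equiv 1$ shows that extremality forces $\phi\perp_p z^l\phi$ for every $l\geq 1$ (that is, $\phi$ is $p$-inner) via \eqref{ppsduwebxzrweq5}, and, passing to the limit in the test above, the companion relations $\sum_m\phi_{m+l}\phi_m^{\langle p-1\rangle}=0$. Excluding non-monomial symbols that satisfy both families at once, equivalently quantifying the surplus mass, is the crux; I expect it to require the uniform convexity of $\ell^{p'}_A$ (its weak parallelogram constants) rather than any purely local estimate, because for $p<2$ the failure of extremality is a genuinely global phenomenon, invisible to first and second order.
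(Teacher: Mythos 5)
Your reverse direction, your $p>2$ case, and your polynomial trick for $p<2$ are all sound: the first two coincide with the paper's own argument (the squeeze $\|\phi\|_p=\|\phi\|_{\mathscr{M}_p}=\|\phi\|_{\mathscr{M}_{p'}}\geq\|\phi\|_{p'}\geq\|\phi\|_p$ plus the equality clause of Proposition \ref{elemresellp}), and the reversed-coefficient test function $f^{(N)}$ is a correct, elementary way to dispose of polynomial symbols. But the proposal has a genuine gap exactly where you flag it: for $1<p<2$ and $\phi$ with infinite support, nothing is proved. You record correct necessary conditions (extremality does force the $p$-inner relations via \eqref{ppsduwebxzrweq5}), but you then only conjecture that some quantitative surplus-mass or uniform-convexity estimate will exclude non-monomials satisfying them, without indicating how such an estimate would go. The difficulty you yourself identify --- that the off-diagonal mass of $\phi f^{(N)}$ may decay along with the tail $\sum_{m>N}|\phi_m|^p$ --- is real, so as written the theorem is established only for $p>2$ and for polynomial symbols.

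The idea you are missing is that the case $1<p<2$ can also be closed by a squeeze --- not among the multiplier norms, where (as you correctly observe) the chain degenerates, but through the dual space, with the conjugate-power sequence $\lambda=\phi^{\langle p-1\rangle}$ as the pivot. Since $\phi\in\mathscr{M}_p$, one has $\lambda\in\ell^{p'}_A$, so $\lambda$ induces a functional on $\mathscr{M}_p=\mathscr{M}_{p'}$, and two exact identities hold: $\langle\lambda,\phi\rangle=\|\phi\|_p^p$ and $\|\lambda\|_{p'}=\|\phi\|_p^{p-1}$. The paper first shows (a short chain using extremality, $\mathscr{M}_p^*=\mathscr{M}_{p'}^*$ with equal norms, and the bound $\|\lambda\|_{\mathscr{M}_{p'}^*}\leq\|\lambda\|_{p'}$) that extremality forces $\|\lambda\|_{\mathscr{M}_{p'}}=\|\lambda\|_{p'}$, and then runs the main chain
\[
\|\phi\|_{\mathscr{M}_p}=\|\phi\|_p\;\geq\;\|\phi\|_{p'}\;\geq\;\|\phi\|_{\mathscr{M}_{p'}^*}\;\geq\;\frac{|\langle\lambda,\phi\rangle|}{\|\lambda\|_{\mathscr{M}_{p'}}}\;=\;\frac{\|\phi\|_p^p}{\|\lambda\|_{p'}}\;=\;\|\phi\|_p,
\]
which collapses to equality throughout; in particular $\|\phi\|_p=\|\phi\|_{p'}$, and Proposition \ref{elemresellp} (with $p\neq p'$) makes $\phi$ a monomial --- the same finishing move you used for $p>2$. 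So no convexity modulus and no quantification of surplus mass is needed: the global information is carried entirely by the exact duality identity $\langle\phi^{\langle p-1\rangle},\phi\rangle=\|\phi\|_p^p$, which converts extremality into the equality $\|\phi\|_p=\|\phi\|_{p'}$.
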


\begin{proof}

%
First, the claim is trivial if $\phi$ is identically zero, so let us suppose otherwise.  Also,
since $\mathscr{M}_p = \mathscr{M}_{p'}$ as point sets and with equal norms, it follows $\mathscr{M}_p^* = \mathscr{M}_{p'}^*$ with equal norms as well.

Now suppose that $2<p<\infty$.  Then $1<p'<2$, and we have
   \[
        \|\phi\|_{p'} \geq \|\phi\|_p  = \|\phi\|_{\mathscr{M}_p} = \|\phi\|_{\mathscr{M}_{p'
        }}  \geq \|\phi\|_{p'}.
   \]
   Equality is forced throughout.  In particular, $\|\phi\|_p = \|\phi\|_{p'}$, which implies that $\phi$ is a monomial, according to Proposition \ref{elemresellp}.  (This step fails if $p = p'= 2$).

Finally, let $1<p<2$, and suppose that $\phi \in \mathscr{M}_p$ is extremal; that is, $\|\phi\|_{\mathscr{M}_p} = \|\phi\|_p$.  Then
\begin{align*}
    \|\phi\|_{\mathscr{M}_p}  &=  \|\phi\|_p  \\
      &\geq \|\phi\|_{p'} \\
      &\geq \|\phi\|_{\mathscr{M}_{p'}^*} \\
      &\geq \frac{|\langle \phi^{\langle p-1\rangle}, \phi \rangle|}{\|\phi^{\langle p-1\rangle}\|_{\mathscr{M}_{p'}}} \\
      &= \frac{\|\phi\|^p_p}{\|\phi^{\langle p-1\rangle}\|_{\mathscr{M}_{p'}}}  \ \ \ (\star) \\
      &= \frac{\|\phi\|^p_p}{\|\phi^{\langle p-1\rangle}\|_{p'}}\\
      &= \|\phi\|_p. 
\end{align*}
This forces $\phi$ to be a monomial.

From the line ($\star$) to the next, we used $\|\phi^{\langle p-1\rangle}\|_{\mathscr{M}_{p'}} = \|\phi^{\langle p-1\rangle}\|_{p'}$, which we derive as follows:
\begin{align*}
   \|\phi\|_p^{p-1} &= \frac{|\langle \phi^{\langle p-1\rangle}, \phi \rangle|}{\|\phi\|_{p}}\\ 
      &=  \frac{|\langle \phi^{\langle p-1\rangle}, \phi \rangle|}{\|\phi\|_{\mathscr{M}_p}} \\
      &\leq  \|\phi^{\langle p-1\rangle}\|_{\mathscr{M}_p^*} \\
      &=  \|\phi^{\langle p-1\rangle}\|_{\mathscr{M}_{p'}^*} \\
      &\leq \|\phi^{\langle p-1\rangle}\|_{p'} \\
      &=  \|\phi\|_p^{p-1}
\end{align*}
and equality must hold throughout.

Conversely, if any monomial multiplier, and it can be checked by inspection that it is extremal.
\end{proof}

\begin{Example}
If $1<p<\infty$ and $0 < |w|<1$, then the function
\[
     B(z)  :=  \frac{1 - z/w}{1 - w^{\langle p'-1 \rangle}z},
\] 
turns out to be $p$-inner \cite[Lemma 3.2]{MR3686895}, and 
\[
     \|B\|_p^p  =  1 + \frac{(1 - |w|^{p'})^{p-1}}{|w|^p}.
\]
Note, in particular, that when $p=2$ the function $B$ is the Blaschke factor, possibly apart from a multiplicative constant, with its root at $w$.

Since $B$ is analytic in a neighborhood of the closed disk $\overline{\mathbb{D}}$, it is a multiplier.  Let us show directly that for $p=4$ it fails to be extremal, as must be the case according to Theorem \ref{ExtremMultMonom}.

We will take as a test function
\[ 
     f(z) := 1 - w^{\langle {p'}-1 \rangle}z,
\]
so that $f \in \ell^p_A$ and 
\[
    \|f\|_p^p =  1 + 1/|w|^p.
\]

Now fix $p=4$, so that $p' = 4/3$.
For $0<a<1$ we have  the elementary inequalities
\begin{align*}
      a - a^2 &> 0\\
      3(a^2 - a) &< a^2 - a \\
      a^3 + 1 - 3a + 3a^2 - a^3 &< a^2 - a + 1 \\
      a^3 + (1 - a)^3  &<  \frac{a^3 + 1}{1+a}\\
      1 + \frac{(1 - a)^3}{a^3} &<  \frac{1 + 1/a^3}{1 + a}.\\
\end{align*}

Substitute $a = |w|^{4/3}$ to obtain
\[
      1 + \frac{(1 - |w|^{4/3})^{4-1}}{|w|^{4}} <  \frac{1 + 1/|w|^{4}}{1 + |w|^{4/3}}. 
\]

This yields the bound
\begin{align*}
      \|B\|_p^p  
           &= 1 + \frac{(1 - |w|^{4/3})^{p-1}}{|w|^{4}}\\
           &< \frac{1 + 1/|w|^{4}}{1 + |w|^{4/3}}\\
           &= \frac{\|Bf\|_p^p}{\|f\|_p^p}\\
           &\leq \|B\|_{\mathscr{M}_p}^p.
\end{align*}

This verifies that $B$ fails to be an extremal multiplier.
\end{Example}


\bibliographystyle{plain}

\bibliography{referencesGeomMultEllp}

\def\cprime{$'$} \def\cprime{$'$} \def\cprime{$'$}
\begin{thebibliography}{10}

\bibitem{MR1440934}
A.~Aleman, S.~Richter, and C.~Sundberg.
\newblock Beurling's theorem for the {B}ergman space.
\newblock {\em Acta Math.}, 177(2):275--310, 1996.

\bibitem{AMW}
J.~Alonso, H.~Martini, and S.~Wu.
\newblock On {B}irkhoff orthogonality and isosceles orthogonality in normed
  linear spaces.
\newblock {\em Aequationes Math.}, 83(1-2):153--189, 2012.

\bibitem{Dima}
Catherine B\'{e}n\'{e}teau, Matthew~C. Fleeman, Dmitry~S. Khavinson, Daniel
  Seco, and Alan~A. Sola.
\newblock Remarks on inner functions and optimal approximants.
\newblock {\em Canad. Math. Bull.}, 61(4):704--716, 2018.

\bibitem{Byn}
W.~L. Bynum.
\newblock Weak parallelogram law for {B}anach spaces.
\newblock {\em Canad. Math. Bull.}, 19(3):269--275, 1976.

\bibitem{BD}
W.~L. Bynum and J.~H. Drew.
\newblock A weak parallelogram law for $\ell_p$.
\newblock {\em Amer. Math. Monthly}, 79:1012--1015, 1972.

\bibitem{Che}
R.~Cheng.
\newblock The zero sets of $\ell^{p}_{A}$ are nested.
\newblock {\em Illinois J. Math.}, 63:601--618, 2019.

\bibitem{CH2}
R.~Cheng and C.~Harris.
\newblock Duality of the weak parallelogram laws on {B}anach spaces.
\newblock {\em J. Math. Anal. Appl.}, 404(1):64--70, 2013.

\bibitem{CMP1}
R.~Cheng, A.~G. Miamee, and M.~Pourahmadi.
\newblock On the geometry of {$L^p(\mu)$} with applications to infinite
  variance processes.
\newblock {\em J. Aust. Math. Soc.}, 74(1):35--42, 2003.

\bibitem{CR}
R.~Cheng and W.~T. Ross.
\newblock Weak parallelogram laws on {B}anach spaces and applications to
  prediction.
\newblock {\em Period. Math. Hungar.}, 71(1):45--58, 2015.

\bibitem{CD}
Raymond Cheng and James~G. Dragas.
\newblock On the failure of canonical factorization in
  $\ell^p_{\uppercase{a}}$.
\newblock {\em J. Math. Anal. Appl.}, 479:1939--1955, 2019.

\bibitem{MR3686895}
Raymond Cheng, Javad Mashreghi, and William~T. Ross.
\newblock Birkhoff--{J}ames {O}rthogonality and the {Z}eros of an {A}nalytic
  {F}unction.
\newblock {\em Comput. Methods Funct. Theory}, 17(3):499--523, 2017.

\bibitem{MR3714456}
Raymond Cheng, Javad Mashreghi, and William~T. Ross.
\newblock Multipliers of sequence spaces.
\newblock {\em Concr. Oper.}, 4:76--108, 2017.

\bibitem{CRM2}
Raymond Cheng, Javad Mashreghi, and William~T. Ross.
\newblock Optimal weak parallelogram constants for {$L^p$} spaces.
\newblock {\em Math. Inequal. Appl.}, 21(4):1047--1058, 2018.

\bibitem{Chengetal1}
Raymond Cheng, Javad Mashreghi, and William~T. Ross.
\newblock Inner functions and zero sets for {$\ell^p_A$}.
\newblock {\em Trans. Amer. Math. Soc.}, 372(3):2045--2072, 2019.

\bibitem{CMR2}
Raymond Cheng, Javad Mashreghi, and William~T. Ross.
\newblock Inner functions in reproducing kernel spaces.
\newblock In {\em Analysis of operators on function spaces}, Trends Math.,
  pages 167--211. Birkh\"{a}user/Springer, Cham, 2019.

\bibitem{CMR}
Raymond Cheng, Javad Mashreghi, and William~T. Ross.
\newblock {\em Function theory and $\ell^p$ spaces}, volume~75 of {\em
  University Lecture Series}.
\newblock American Mathematical Society, Providence, R.I., 2020.

\bibitem{CMR3}
Raymond Cheng, Javad Mashreghi, and William~T. Ross.
\newblock Inner vectors for {T}oeplitz operators.
\newblock {\em Complex Analysis and Spectral Theory}, Special Volume in
  Celebration of Thomas Ransford's 60th Birthday(Khavinson and Mashreghi,
  Eds.), (in press).

\bibitem{Clark}
J.~A. Clarkson.
\newblock Uniformly convex spaces.
\newblock {\em Trans. Amer. Math. Soc.}, 40(3):396--414, 1936.

\bibitem{MR0174937}
Karel de~Leeuw.
\newblock On {$L_{p}$} multipliers.
\newblock {\em Ann. of Math. (2)}, 81:364--379, 1965.

\bibitem{MR1278431}
P.~Duren, D.~Khavinson, H.~S. Shapiro, and C.~Sundberg.
\newblock Invariant subspaces in {B}ergman spaces and the biharmonic equation.
\newblock {\em Michigan Math. J.}, 41(2):247--259, 1994.

\bibitem{MR1398090}
Peter Duren, Dmitry Khavinson, and Harold~S. Shapiro.
\newblock Extremal functions in invariant subspaces of {B}ergman spaces.
\newblock {\em Illinois J. Math.}, 40(2):202--210, 1996.

\bibitem{MR1197044}
Peter Duren, Dmitry Khavinson, Harold~S. Shapiro, and Carl Sundberg.
\newblock Contractive zero-divisors in {B}ergman spaces.
\newblock {\em Pacific J. Math.}, 157(1):37--56, 1993.

\bibitem{MR0121655}
Lars H{\"o}rmander.
\newblock Estimates for translation invariant operators in {$L^{p}$}\ spaces.
\newblock {\em Acta Math.}, 104:93--140, 1960.

\bibitem{Jam}
R.~C. James.
\newblock Orthogonality and linear functionals in normed linear spaces.
\newblock {\em Trans. Amer. Math. Soc.}, 61:265--292, 1947.

\bibitem{MR1296567}
V.~Lebedev and A.~Olevski{\u\i}.
\newblock Idempotents of {F}ourier multiplier algebra.
\newblock {\em Geom. Funct. Anal.}, 4(5):539--544, 1994.

\bibitem{MR1678853}
V.~V. Lebedev.
\newblock Inner functions and {$l^p$}-multipliers.
\newblock {\em Funktsional. Anal. i Prilozhen.}, 32(4):10--21, 95, 1998.

\bibitem{MR1771763}
V.~V. Lebedev.
\newblock Spectra of inner functions and {$l^p$}-multipliers.
\newblock In {\em Complex analysis, operators, and related topics}, volume 113
  of {\em Oper. Theory Adv. Appl.}, pages 205--212. Birkh\"auser, Basel, 2000.

\bibitem{MR2238173}
V.~V. Lebedev and A.~M. Olevski{\u\i}.
\newblock Fourier {$L^p$}-multipliers with bounded powers.
\newblock {\em Izv. Ross. Akad. Nauk Ser. Mat.}, 70(3):129--166, 2006.

\bibitem{MR1373751}
Vladimir Lebedev and Alexander Olevski{\u\i}.
\newblock Bounded groups of translation invariant operators.
\newblock {\em C. R. Acad. Sci. Paris S\'er. I Math.}, 322(2):143--147, 1996.

\bibitem{MR0193513}
N.~K. Nikol{\cprime}ski{\u\i}.
\newblock On spaces and algebras of {T}oeplitz matrices operating in {$l^{p}$}.
\newblock {\em Sibirsk. Mat. \v Z.}, 7:146--158, 1966.

\bibitem{MR936999}
Stefan Richter.
\newblock Invariant subspaces of the {D}irichlet shift.
\newblock {\em J. Reine Angew. Math.}, 386:205--220, 1988.

\bibitem{Seco}
D.~Seco.
\newblock A characterization of {D}irichlet-inner functions.
\newblock {\em Complex Anal. Oper. Theory}, 13(4):1653--1659, 2019.

\bibitem{MR583804}
I.~{\`E}. Verbicki{\u\i}.
\newblock Multipliers of spaces {$l^{p}_{A}$}.
\newblock {\em Funktsional. Anal. i Prilozhen.}, 14(3):67--68, 1980.

\bibitem{MR0352958}
S.~A. Vinogradov.
\newblock Multipliers of power series with sequence of coefficients from
  {$l^{p}$}.
\newblock {\em Zap. Nau\v cn. Sem. Leningrad. Otdel. Mat. Inst. Steklov.
  (LOMI)}, 39:30--39, 1974.
\newblock Investigations on linear operators and the theory of functions, IV.

\bibitem{MR592495}
S.~A. Vinogradov.
\newblock Multiplicative properties of power series with a sequence of
  coefficients from {$l^{p}$}.
\newblock {\em Dokl. Akad. Nauk SSSR}, 254(6):1301--1306, 1980.

\end{thebibliography}
   
\end{document}